\newcommand{\PP}{\mathbb{P}}
\newcommand{\RR}{\mathbb{R}}
\newcommand{\QQ}{\mathbb{Q}}
\newcommand{\CC}{\mathbb{C} }
\begin{document}

\title{Theta Surfaces
\thanks{Dedicated to J{\"u}rgen Jost on the occasion of his 65th birthday.}}



\author{Daniele Agostini \and T\"urk\"u \"Ozl\"um \c{C}elik \\ Julia Struwe  \and Bernd Sturmfels 
}


\institute{Daniele Agostini \at  
              Humboldt-Universit\"at zu Berlin \\
              \email{daniele.agostini@math.hu-berlin.de}           
           \and
            T\"urk\"u \"Ozl\"um \c{C}elik \at
              Universit\"at Leipzig and MPI-MiS Leipzig \\
              \email{turkuozlum@gmail.com} 
              \and
            Julia Struwe \at
              Universit\"at Leipzig and MPI-MiS Leipzig \\ 
              \email{julia.struwe@posteo.de}
              \and
           Bernd Sturmfels \at
              MPI-MiS Leipzig and UC Berkeley \\ 
              \email{bernd@mis.mpg.de}
}



\maketitle

\begin{abstract}
A theta surface in affine 3-space is the zero set
of a Riemann theta function in genus~3. This includes
surfaces arising from special plane quartics that
are singular or reducible.  
Lie and Poincar\'e showed that any analytic surface that is
the Minkowski sum of two space curves in two different ways is a theta surface.
The four space curves that generate such a
double translation structure
 are parametrized by abelian integrals, so they are usually not algebraic.
This paper offers a new view on this classical topic through the lens of computation.
We present practical tools for passing between quartic curves and their theta surfaces,
and we develop the numerical algebraic geometry of
degenerations of theta functions.

\keywords{Translation surface \and Abelian integral \and Riemann theta function \and Theta divisor}
\end{abstract}

\section{Introduction}

Our first example of a theta surface is {\em Scherk's minimal surface}, given by the equation
\begin{equation}
\label{eq:scherk}
{\rm sin}(X)\, \,-\,\, {\rm sin}(Y) \cdot {\rm exp}(Z)\,\,=\,\, 0.
 \end{equation}
 
\begin{figure}[ht]
  \centering
  \includegraphics[scale = 0.45]{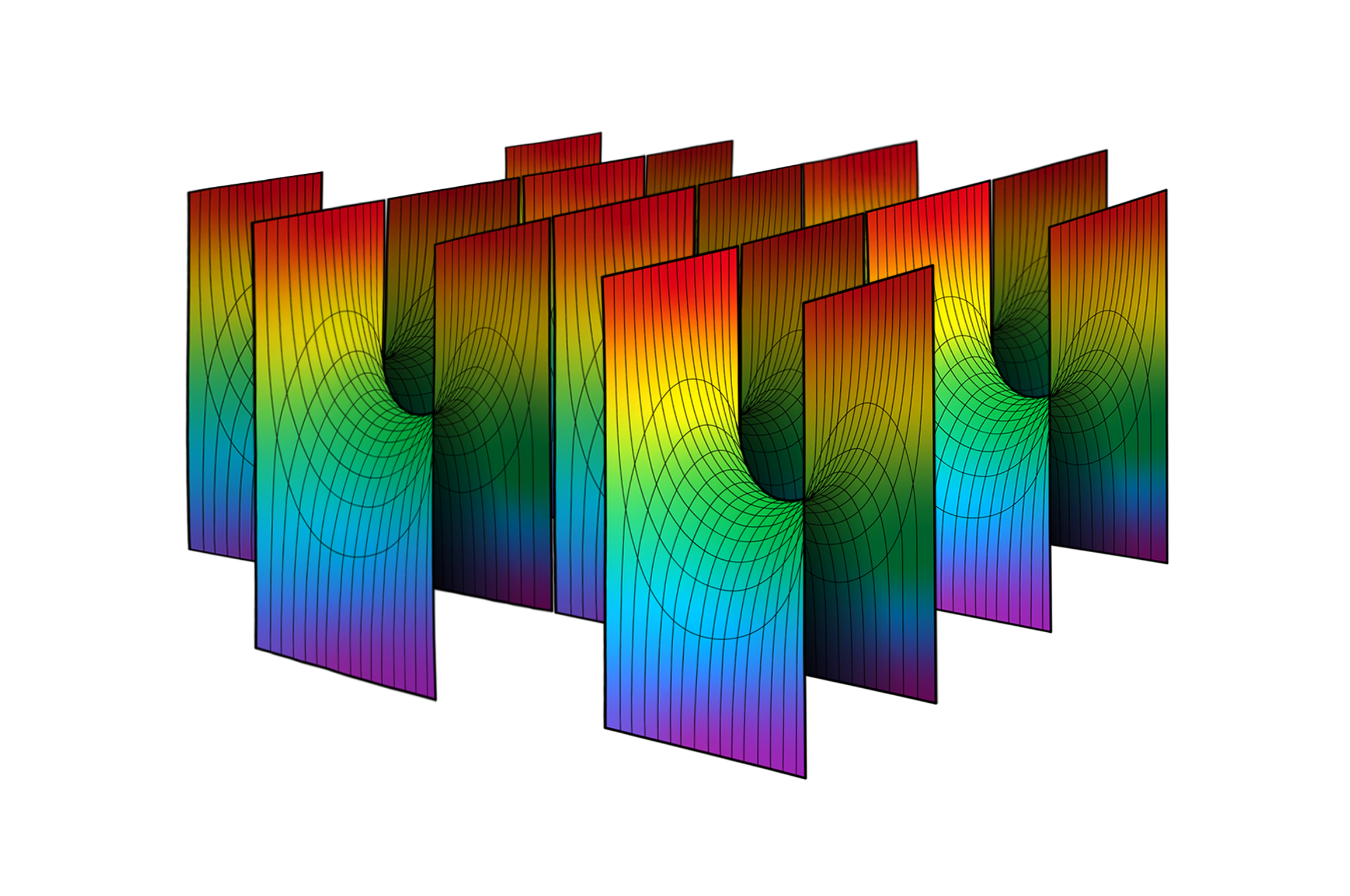}
  \vspace{-0.2in}
  \caption{Scherk's minimal surface.} 
    \label{polytope}
\end{figure}

 This surface arises from the following quartic
 curve in the complex projective plane~$\PP^2$:
 \begin{equation} 
 \label{eq:quartic1}
xy(x^2+y^2+z^2) \,\,=\,\,0. 
 \end{equation}
We use $X,Y,Z$ as affine coordinates for $\RR^3$
and $x,y,z$ as homogeneous coordinates for $\PP^2$.
Scherk's minimal surface is obtained as the Minkowski sum of two parametric space curves:
\begin{equation}
\label{eq:scherkpara1}
\begin{matrix}
\!  (X,Y,Z)  \,\,& \, = & 
\bigl( {\rm arctan}(s), 0, {\rm log}(s) -  {\rm log}(s^2{+}1)/2 \bigr)  \\ & & +\,\,
\bigl( 0,  -{\rm arctan}(t),- {\rm log}(t) + {\rm log}(t^2{+}1)/2 \bigr)  . 
\end{matrix}
\end{equation}
 The derivation of  (\ref{eq:scherk})  from (\ref{eq:quartic1})
via (\ref{eq:scherkpara1}) is given in Example \ref{ex:scherkalgo1}.
This  computation is originally due to Richard~Kummer \cite[p.~52]{Kum} whose
1894 dissertation also displays a plaster model.

Following the classical literature (cf.~\cite{Little83,Sch}), a {\em surface of double translation} equals
\begin{equation*}
 \mathcal{S} \quad = \quad \mathcal{C}_1 + \mathcal{C}_2 \,\, = \,\, \mathcal{C}_3 + \mathcal{C}_4 , 
 \end{equation*}
where $\mathcal{C}_1,\mathcal{C}_2,\mathcal{C}_3,\mathcal{C}_4$ are curves in $\RR^3$,
and the two decompositions are distinct.

We note that Scherk's minimal surface
(\ref{eq:scherk})  is a surface of double translation. A first representation 
$ \mathcal{S} = \mathcal{C}_1 + \mathcal{C}_2$ was given in (\ref{eq:scherkpara1}).
A second representation  $ \mathcal{S} = \mathcal{C}_3 + \mathcal{C}_4$ equals
\begin{equation}
\label{eq:scherkpara2}
\begin{matrix}
X  & = & {\rm arctan}(u)\,\,\,+\,\,\,{\rm arctan} (v) & = & 
{\rm arctan} \bigl(\frac{u+v}{1-uv} \bigr), \medskip \\
Y & = & {\rm arctan}(5u)\,\,+\,\,{\rm arctan} (5v) & = & \,
{\rm arctan} \bigl(\frac{5u+5v}{1-25uv} \bigr),  \smallskip \\
Z & = &   \frac{1}{2} {\rm log}\bigl( \frac{1+(5u)^2}{5(1+u^2)} \bigr) \, + \,
                \frac{1}{2} {\rm log}\bigl( \frac{1+(5v)^2}{5(1+v^2)} \bigr).
                & & 
\end{matrix}
\end{equation}
It is instructive to verify that both parametrizations (\ref{eq:scherkpara1}) and
(\ref{eq:scherkpara2}) satisfy the equation~(\ref{eq:scherk}). 

A remarkable theorem
due to Sophus Lie \cite{LieGes2}, refined by Henri Poincar\'e in \cite{Poin95},
states that these are precisely the surfaces derived from plane quartic curves by
the integrals appearing in Abel's Theorem.
The implicit equation of such a surface is an analytic object introduced by
Bernhard Riemann. Namely, if the quartic is smooth then this is Riemann's
 {\em theta function} $\theta$.
Modern algebraic geometers view the surface
$\{ \theta = 0\}$ as the {\em theta divisor} in the Jacobian.
We replace that abelian threefold by
its universal cover $\CC^3$ and we focus on the subset $\RR^3$ of real points.
Our object of study is
 the real analytic surface
 $\{ \theta = 0\}$  in $\RR^3$.

 The present article is organized as follows.
 Section \ref{sec2} derives the parametrization of theta surfaces by abelian integrals.
 We review Abel's Theorem~\ref{thm:abel},
 Riemann's Theorem~\ref{thm:riemann} and
 Lie's Theorem \ref{thm:lie},  all from the perspective developed by
Lie's successors in \cite{Eies08,Eies09,Kum,Sch,Wie}.
 In Section \ref{sec3} we present a symbolic algorithm for computing theta
 surfaces. Here the input is a reducible quartic curve whose abelian integrals
 can be evaluated in closed form in a computer algebra system. As an illustration we show
  how (\ref{eq:scherkpara1}) and (\ref{eq:scherk}) are derived from (\ref{eq:quartic1}).
 
In Section~\ref{sec5} 
we discuss degenerations of curves and their Jacobians via tropical geometry.
This leads to the formula in Theorem \ref{thm:degtheta} for the implicit equation
   of a degenerate theta surface. 
 Based on the combinatorics of Voronoi cells and Delaunay polytopes, this
 offers a present-day explanation for formulas, such as (\ref{eq:scherk}),
 that were found well over a century ago.

Theta surfaces are usually transcendental, but they can be algebraic in special~situations. Algebraic theta surfaces were classified by John Eiesland in~\cite{Eies08}. 
Examples include the cubic surface mentioned by
Shiing-Shen Chern in \cite[p.~2]{Chern}, the quintic surface
shown by John Little in \cite[Example 4.3]{Little83} and the quadric surface arising from the union of four concurrent lines in Example~\ref{Quadric}.
In Section \ref{sec6} we revisit Eiesland's census of quartics with algebraic theta surfaces. We
present derivations and connections to sigma functions \cite{BEL,naka}.

In Section \ref{sec4} we study theta surfaces via numerical computation.
   Building on
state-of-the-art methods for evaluating abelian integrals and theta functions,
we develop a numerical algorithm whose input is a 
smooth quartic curve in~$\PP^2$ and whose output is its
theta surface.

Our article proposes the name {\em theta surface}
for what Sophie Lie called a {\em surface of double translation}.
We return to historical sources in Section \ref{sec7}, by offering
a retrospective on the  remarkable work done in Leipzig in the late
$19$th century  by Lie's circle  \cite{Lie,Sch}.
 Our presentation here serves to connect differential geometry and algebraic geometry,
reconciling the  $19$th and $20$th centuries, with a view towards applied mathematics in
           the $21$st century.
     On that note, there is a natural connection 
     to integrable systems and mathematical physics. The three-phase solutions \cite{DFS} to 
     the KP equation are closely related
     to  theta surfaces. Double translation surfaces also 
     represent invariants in the study of 4-wave interactions in~\cite{BalFer}. 

 \section{Parametrization by Abelian Integrals}
\label{sec2}

We begin with the parametric representation of theta surfaces.
Our point of departure is a real algebraic curve $\mathcal{Q}$
of degree four in the projective plane $\PP^2$.
This is the zero set of  a ternary quartic
that is unique up to scaling.
We thus identify the curve with its polynomial
\begin{equation*}
\label{eq:quartic2}
\qquad
 Q(x,y,z) \quad =\,\,
 \sum_{i+j+k = 4} \! \!c_{ijk} x^i y^j z^k, \qquad \qquad
 c_{ijk} \in \RR.
 \end{equation*}
This quartic can be reducible, as in (\ref{eq:quartic1}), but we assume that it is squarefree.
The symbol $\mathcal{Q}$ refers to the complex curve, and we write
$\mathcal{Q}_\RR$ for its subset of real points.
If $\mathcal{Q}$ is nonsingular then it is a non-hyperelliptic Riemann surface of genus $3$,
canonically embedded into $\PP^2$.

The holomorphic differentials on $\mathcal{Q}$ comprise a $3$-dimensional vector space $H^0(\mathcal{Q}, \Omega^1_\mathcal{Q})$.  Assuming that $z$ does not divide $Q$,
we choose a basis $\{\omega_1,\omega_2,\omega_3\}$ for this space as follows.
Consider the dehomogenization $q(x,y) = Q(x,y,1)$, set
$q_x = \partial q / \partial x$ and $q_y = \partial q / \partial y$, and fix
\begin{equation}
\label{eq:omegabasis1}
\omega_1 \, = \,\frac{x}{q_y} dx \, , \quad
\omega_2 \, = \,\frac{y}{q_y} dx \, , \quad
\omega_3 \, = \,\frac{1}{q_y} dx.
\end{equation}
As the quartic $q(x,y)$ is defined over $\RR$, so is the basis.
To be precise, all coefficients appearing in the $\omega_i$ are real numbers.
Since $\,d q = q_x dx + q_y dy = 0 \,$ holds on  $\mathcal{Q}$, we can also~write
\begin{equation}
\label{eq:omegabasis2}
\omega_1 \, = \,-\frac{x}{q_x} dy \, , \quad
\omega_2 \, = \,-\frac{y}{q_x} dy \, , \quad
\omega_3 \, = \,-\frac{1}{q_x} dy.
\end{equation}

\begin{remark}
\label{remk:canonicalemb}
If $p=(x:y:1)$ is any point on $\mathcal{Q}$, then 
$\,(\omega_1(p):\omega_2(p):\omega_3(p)) \,=\,  p$.
This reflects the fact that $\mathcal{Q}$ is the canonical embedding of
the abstract curve underlying $\mathcal{Q}$.
\end{remark}

Consider a path on the Riemann surface $\mathcal{Q}$ with end points $p$ and $r$.
We can integrate the form $\omega_i$ along that path and obtain
a complex number $\int_{p}^{r} \omega_i$.
If the path is real,  from $p$ to $r$ on a single connected component of
the real curve
$\mathcal{Q}_\RR$, then $\int_{p}^{r} \omega_i$ is a real number.
This number is computed in practise by regarding $y = y(x)$
as a function of $x$, defined by $q(x,y) = 0$, and by
computing the definite integral from the $x$-coordinate of $p$ to the $x$-coordinate of~$r$.
Alternatively, after replacing (\ref{eq:omegabasis1})  with (\ref{eq:omegabasis2}),
we can regard $x = x(y)$ as an implicitly defined  function of $y$, and take
the definite integral from the $y$-coordinate of $p$ to that of~$r$.

We fix a line $\mathcal{L}(0)$ in $\PP^2$ that intersects  $\mathcal{Q}$ in four
distinct nonsingular points $p_{1}(0)$, $p_{2}(0),p_{3}(0)$ and $p_{4}(0)$. For $p_j \in \mathcal{Q}$ sufficiently close to $p_j(0)$,
 we obtain an analytic curve 
\begin{equation}
\label{eq:omega}
\begin{matrix}
\Omega_j (p_j) \,\,= \,\,(\Omega_{1j},\Omega_{2j},\Omega_{3j}) (p_j) \,\, = \,\,
\bigl(\, \int_{p_j(0)}^{p_j} \omega_1\, ,\, \int_{p_j(0)}^{p_j} \omega_2 \,,\,  \int_{p_j(0)}^{p_j} \omega_3 \, \bigr).
\end{matrix}
\end{equation} 

\begin{remark}
\label{rmk:omegatangents} 
By the Fundamental Theorem of Calculus, the derivatives of these curves are
\begin{equation*}
\label{eq:omegaderivative}
\dot{\Omega}_j(p_j) \,\,\,= \,\,\,(\omega_1(p_j),\omega_2(p_j),\omega_3(p_j))\,\,\, = \,\,\, p_j . 
\end{equation*}
Here, the derivative is performed with 
respect to the local parameter of the point $p_j$. The second equality is Remark \ref{remk:canonicalemb}. In words,
the tangent direction to $\Omega_j$ at $p_j$ is $p_j$ itself.
\end{remark}  

The following theorem paraphrases a basic result from the theory of Riemann surfaces:

\begin{theorem}[Abel's Theorem] \label{thm:abel}
Suppose that the points $p_1,p_2,p_3,p_4$ are collinear. Then
\begin{equation*}
\Omega_1(p_1)\,+\,\Omega_2(p_2)\,+\,\Omega_3(p_3)\,+\,\Omega_4(p_4) \,\,\,=\,\,\, 0.
\end{equation*} 
\end{theorem}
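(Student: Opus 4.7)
The plan is to show that all three components of $\Omega_1(p_1)+\Omega_2(p_2)+\Omega_3(p_3)+\Omega_4(p_4)$ vanish by connecting the given collinear quadruple to the base quadruple through a one-parameter family of lines. Choose an analytic family $\mathcal{L}(t)$, $t\in[0,1]$, of lines in $\PP^2$ with $\mathcal{L}(0)$ the base line and $\mathcal{L}(1)$ the given line through $p_1,\dots,p_4$, and let $p_j(t)$ be the analytic continuation of $p_j(0)$ along $\mathcal{L}(t)$. It will be enough to show that
\[
F_i(t) \,:=\, \sum_{j=1}^4 \int_{p_j(0)}^{p_j(t)}\omega_i \qquad (i=1,2,3)
\]
is constant in $t$, because $F_i(0)=0$ by construction and $F_i(1)$ is the $i$-th component of the sum we want to vanish.

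The main step is to differentiate $F_i$ in $t$ and recognize the result as a sum of residues on $\PP^1$. After a linear change of coordinates on $\PP^2$ if needed, I would write $\mathcal{L}(t)$ in affine form as $y=\alpha(t)x+\beta(t)$, so that the $x$-coordinates $x_1(t),\dots,x_4(t)$ of the four intersection points are the roots of the degree-four polynomial $P_t(x):=q\bigl(x,\alpha(t)x+\beta(t)\bigr)$. Implicit differentiation gives
\[
\dot{x}_j(t)\,=\,-\,\frac{q_y(p_j)\bigl(\dot\alpha(t)x_j+\dot\beta(t)\bigr)}{P'_t(x_j)},\qquad P'_t(x_j)=q_x(p_j)+\alpha(t)\,q_y(p_j).
\]
Writing $\omega_i=\phi_i\,dx$ as in (\ref{eq:omegabasis1}), the Fundamental Theorem of Calculus yields $F_i'(t)=\sum_j \phi_i(p_j)\,\dot{x}_j(t)$, and the factor $q_y(p_j)$ in $\dot{x}_j$ cancels the denominator of $\phi_i$. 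With $N_1(x)=x$, $N_2(x)=\alpha(t)x+\beta(t)$ and $N_3(x)=1$, this simplifies to
\[
F_i'(t)\,=\,-\sum_{j=1}^4\frac{N_i(x_j)\bigl(\dot\alpha x_j+\dot\beta\bigr)}{P'_t(x_j)}\,=\,-\sum_{j=1}^4 \mathrm{Res}_{x=x_j}\!\left(\frac{N_i(x)\bigl(\dot\alpha x+\dot\beta\bigr)}{P_t(x)}\,dx\right).
\]

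At this point the residue theorem on $\PP^1$ does the work: in each case the numerator of the rational one-form has degree at most $2$ while the denominator has degree $4$, so the form is regular at $x=\infty$ and has no further poles beyond the $x_j$. The sum of all residues therefore vanishes, giving $F_i'(t)\equiv 0$ and hence $F_i\equiv 0$, which is the claim.

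The hard part will be arranging the path $\mathcal{L}(t)$ so that the implicit differentiation is valid. At isolated values of $t$ the line $\mathcal{L}(t)$ could become tangent or bitangent to $\mathcal{Q}$, causing two of the $x_j(t)$ to collide and $P'_t(x_j)$ to vanish; the lines with such pathologies form a proper subvariety of the dual plane, so a generic analytic path avoids them and the $p_j(t)$ depend smoothly on $t$. Vertical configurations and the line at infinity are handled by a preliminary change of affine chart, or by switching to the alternative basis (\ref{eq:omegabasis2}) in which $\omega_i=\psi_i\,dy$ and the same residue argument runs in the $y$-variable.
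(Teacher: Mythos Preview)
The paper does not prove Theorem~\ref{thm:abel}; it is simply stated as ``a basic result from the theory of Riemann surfaces'' and used as a black box. Your proposal, by contrast, supplies a self-contained argument, and it is correct.

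What you have written is the classical elementary proof of Abel's theorem for plane curves: differentiate the sum of abelian integrals along a one-parameter family of lines, let the $q_y$ in the numerator of $\dot x_j$ cancel the $q_y$ in the denominator of $\omega_i$, and identify the result as the total residue on $\PP^1$ of a rational one-form whose numerator has degree at most $2$ and whose denominator $P_t$ has degree $4$. The degree count forces regularity at infinity, so the sum of residues vanishes and $F_i'\equiv 0$. Your handling of the technical caveats---avoiding tangent and bitangent lines by choosing a generic path in the dual plane, and switching between the representations \eqref{eq:omegabasis1} and \eqref{eq:omegabasis2} for vertical configurations---is adequate in the local setting the paper works in, where the $p_j$ are assumed close to the $p_j(0)$ and hence the line through them is close to the transversal base line $\mathcal{L}(0)$. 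One small point worth making explicit: $P_t$ has degree exactly $4$ only when the moving line does not meet $\mathcal{Q}$ on the line at infinity $\{z=0\}$; this is again a codimension-one condition on the line and is absorbed by the same genericity argument.
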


Now, let us fix analytic coordinates $s,t$ on the curve, centered at $p_1(0)$ and $p_2(0)$ respectively. Then,
 for every choice of $s,t$ in a small neighborhood of $0$, we obtain two nearby points $p_1(s),p_2(t)$.
  The corresponding theta surface $\mathcal{S}$ is the image of the parametrization
\begin{equation}
\label{eq:stmap}
(s,t) \quad \mapsto \quad \Omega_1(p_1(s)) \,+ \, \Omega_2(p_2(t)) .
\end{equation}
The image is a complex analytic surface $\mathcal{S}$ in $\CC^3$. 
However, if the points $p_1(0)$ and $p_2(0)$ are real
then we take $s$ and $t$ in a small neighborhood of $0$ in  $\RR$, and 
$\mathcal{S}$ is a real surface  in~$\RR^3$.
This surface is analytic and only defined locally, since $s$ and $t$ are local parameters.

Shifting gears, let us now consider an arbitrary analytic
surface $\mathcal{T}$ in $\mathbb{C}^3$. We say that $\mathcal{T}$ is
 a \emph{translation surface}
 if there are two smooth analytic curves $\mathcal{C}_1,\mathcal{C}_2\subset \mathbb{C}^3$ such that 
	\[ \mathcal{T} \,\,=\,\, \mathcal{C}_1 \,+\, \mathcal{C}_2  
	\,\,=\,\,\bigl\{\, p_1 + p_2 \,\,|\,\, p_1\in \mathcal{C}_1, p_2\in \mathcal{C}_2 \,\bigr\} .\]
In words, $\mathcal{T}$ is the {\em Minkowski sum} of 
the two {\em generating curves} $\mathcal{C}_1$ and $ \mathcal{C}_2$.
	 We require the parametrization to be injective, 
i.e.~for each point $x\in \mathcal{T}$ there are unique points 
$p_1\in \mathcal{C}_1,p_2\in \mathcal{C}_2$ such that $x=p_1+p_2$.  
Note that,
 if $\alpha_1,\alpha_2\colon \Delta \to \mathbb{C}^3$ are local parametrizations of 
 the generating curves $\mathcal{C}_1,\mathcal{C}_2$, then the translation surface $S$ has the parametrization
   \[ \mathcal{T} \,\,= \,\,\{ \alpha_1(s) + \alpha_2(t) \} \,\,=\,\, 
   \left\{ \begin{pmatrix} \alpha_{11}(s)+\alpha_{21}(t) \\ \alpha_{12}(s)+\alpha_{22}(t) \\ \alpha_{13}(s) + \alpha_{23}(t) \end{pmatrix} \right\} .\]
        
   \begin{definition}[Double translation surfaces] A translation surface $\mathcal{T}\subset \mathbb{C}^3$ as above is a \emph{double translation surface} if there exists other smooth analytic curves $\mathcal{C}_3,\mathcal{C}_4 \subset \mathbb{C}^3$ such that
\begin{equation} \label{eq:doubletranslation}
   	  \mathcal{T} \,\,=\,\, \mathcal{C}_1\,+\,\mathcal{C}_2 \,\,= \,\,\mathcal{C}_3\,+\,\mathcal{C}_4. 
\end{equation}	  
\end{definition}

Returning to the setting of algebraic geometry, let $\mathcal{S}$
be the theta surface derived as above from a quartic curve $\mathcal{Q}$
in the plane $\PP^2$.
 This formula (\ref{eq:stmap}) shows that $\mathcal{S}$ is a translation surface.
In fact,          the theta surface $\mathcal{S}$ is a double translation surface. 
This is a consequence of Abel's Theorem. To see this, consider the line
 $\mathcal{L}$ that is spanned by the points $p_1(s)$ and $p_2(t)$ in $\PP^2$.
This line intersects the quartic curve $\mathcal{Q}$ in two other
points $p_3(s,t)$ and $p_4(s,t)$, and these points are close to $p_3(0)$ and $p_4(0)$ respectively.
Theorem \ref{thm:abel} says that
\begin{equation}
\label{eq:stmapabel}
\Omega_1(p_1(s))\,+\,\Omega_2(p_2(t))\,\, =\,\, -\Omega_3(p_3(s,t))\, -\, \Omega_4(p_4(s,t)).
\end{equation}
The points $p_3(s,t)$ and $p_4(s,t)$ span the same line $\mathcal{L}$, so they determine $p_1(s)$
and $p_2(t)$ as the residual intersection points of the curve $\mathcal{Q}$ with $\mathcal{L}$. This means  that the points $p_3(s,t)$ and $p_4(s,t)$ can move freely in neighborhoods of $p_3(0)$ and $p_4(0)$ on the curve
$\mathcal{Q}$. If $u,v$ are analytic coordinates on $\mathcal{Q}$, 
centered in $p_3(0)$ and $p_4(0)$ respectively, then  \eqref{eq:stmapabel} shows that 
\begin{equation*}
\label{eq:uvmaps}
(u,v) \quad \mapsto \quad  -\Omega_3(p_3(u))-\Omega_4(p_4(v))
\end{equation*}
is another parametrization of the surface $\mathcal{S}$.
Hence (\ref{eq:doubletranslation}) holds, with generating curves
\begin{equation}\label{eq:analyticcurves}
\mathcal{C}_1 = \Omega_1(p_1(s)), \quad \mathcal{C}_2 =\Omega_2(p_2(t)), \qquad \mathcal{C}_3 = -\Omega_3(p_3(u)), \quad 
\mathcal{C}_4 = -\Omega_4(p_4(v)).
\end{equation}
In particular, we see that the two translation structures on 
the
 theta 
surface 
$\mathcal{S}$
 are distinct. Indeed, Remark \ref{rmk:omegatangents} tells us that the tangent lines to these curves at $0$ correspond to the four points $p_1(0),p_2(0),p_3(0),p_4(0)\in \PP^2$, and these are distinct by construction.

\begin{remark}\label{rmk:quarticfromcurves}
Remark \ref{rmk:omegatangents} shows that 
the tangent directions to the analytic curves in \eqref{eq:analyticcurves}~are
\begin{align*} 
\dot{\mathcal{C}}_1(s) &= \phantom{-} \dot{\Omega}_1(p_1(s)) = p_1(s),  
& \dot{\mathcal{C}}_2(t) &= \phantom{-} \dot{\Omega}_2(p_2(t))=p_2(t),\\
 \dot{\mathcal{C}}_3(u) &= -\dot{\Omega}_3(p_3(u)) = p_3(u), &  
\dot{\mathcal{C}}_4(v) &= -\dot{\Omega}_4(p_4(v)) = p_4(v) .
\end{align*}
Here $p_1(s),p_2(t),p_3(u),p_4(v)$ are regarded as points in the projective 
plane $\mathbb{P}^2$, indicating tangent directions in $\CC^3$, so the sign does not matter. 
Hence, as in Remark \ref{remk:canonicalemb},
 the analytic arcs $\dot{\mathcal{C}}_1,\dot{\mathcal{C}}_2,\dot{\mathcal{C}}_3,\dot{\mathcal{C}}_4$ lie on the quartic $\mathcal{Q}$. In particular, if we are given a theta surface $\mathcal{S}$ and one generating curve $\mathcal{C}_i$, but not the quartic $\mathcal{Q}$, then  we can recover $\mathcal{Q}$  as a quartic in $\PP^2$ that contains the analytic arc $\dot{\mathcal{C}}_i$.
 This fact will be used in Section \ref{sec7}, together with a result of Lie, to give a differential-geometric solution 
to Torelli's problem for genus three curves.
\end{remark}

\smallskip 

Now assume that $\mathcal{Q}$ is a smooth quartic curve. Then we can find an implicit equation for our 
surface via the \emph{Riemann theta function}. Recall \cite{Fay,Mum1} that this is the holomorphic function
\begin{equation}
\label{eq:theta}
\theta(\mathbf{x},B) \,\,= \,\,
\sum_{n\in \mathbb{Z}^3}\mathbf{e}\left( -\frac{1}{2}n^tBn \,+\,i\, n^t\mathbf{x} \right) 
\,\, =\,\,
  \sum_{n\in \mathbb{Z}^3}\exp\left(-\pi n^tBn\right)\cdot \operatorname{cos}(2\pi n^t\mathbf{x}),
\end{equation}
where $\mathbf{e}(t):=\exp(2\pi t)$, $\mathbf{x}=(X,Y,Z)\in \mathbb{C}^3$, and
$B \in \mathbb{C}^{3\times 3}$ is a symmetric matrix with positive-definite real part.
The second sum shows that $\theta$ takes real values if $B$ and $\mathbf{x}$ are~real. 

This definition is slightly different from the usual ones, where $B$ is taken either with
positive definite imaginary part or with negative definite real part. We choose this version
of the Riemann theta function in order to highlight the real numbers. Namely,
 the \emph{theta divisor}
\begin{equation}
\label{eq:thetadivisor}
\Theta_B \,\,:=\,\, \{ \,\mathbf{x} \,\,|\,\, \theta(\mathbf{x},B) = 0\, \}\, \,\,\subset \,\, \,\mathbb{C}^3.
\end{equation}
restricts to a real analytic surface when both $\mathbf{x}$ and $B$ are real.  In general, we will show that the theta divisor coincides with the theta surface above. To this end, we choose a symplectic basis $\alpha_1,\beta_1,\alpha_2,\beta_2,\alpha_3,\beta_3$ for $H_1(\mathcal{Q},\mathbb{Z})$.
The intersection product on $\mathcal{Q}$ is given~by
\begin{equation*}\label{symplecticBasis}
(\alpha_j \cdot \alpha_k) =0, \qquad (\beta_j \cdot \beta_k )=0, \quad {\rm and} \quad (\alpha_j \cdot \beta_k) = \delta_{jk}.
\end{equation*} 
The {\em period matrix} for the Riemann surface $\mathcal{Q}$ with respect to this 
basis equals
\begin{align}\label{periodMatrix}
\Pi \,\,= \,\,\left( \Pi_{\alpha} \,|\, \Pi_{\beta} \right) \,\,\in\,\, \mathbb{C}^{3\times 6},
\end{align}
with entries $(\Pi_{\alpha})_{jk} = \int_{\alpha_k} \omega_j$ and $(\Pi_{\beta})_{jk} = \int_{\beta_k}\omega_j$. As a consequence of Riemann's relations \cite[Theorem 2.1]{Mum1}, the matrix $\Pi_{\alpha}$ is invertible, 
and the corresponding \emph{Riemann matrix} is
\begin{equation}
\label{eq:riemannmatrix}
B \,\,=\,\, -i \cdot  \Pi_{\alpha}^{-1}\Pi_{\beta} .
\end{equation}
Riemann's relations  also show that the matrix $B$ is symmetric with positive-definite real part, so we can consider  the theta divisor $\,\Theta_B \subset \mathbb{C}^3\,$ as in
(\ref{eq:thetadivisor}).
A fundamental theorem of Riemann implies that this coincides with our theta surface, up to a change of coordinates.

\begin{theorem}[Riemann's Theorem] \label{thm:riemann}
The theta surface $\mathcal{S}$ and the theta divisor $\Theta_B$ coincide up to an affine change 
of coordinates on $\mathbb{C}^3$. More precisely, there is a vector $c\in \mathbb{C}^3$ such~that 
	\begin{equation}
	\mathcal{S} \,\,= \,\,\Pi_{\alpha} \cdot \Theta_B + c,
	\end{equation}
	where the equality is meant on all points where the parametrized surface $\mathcal{S}$ is defined. 
\end{theorem}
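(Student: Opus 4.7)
The plan is to recognize the parametrization (\ref{eq:stmap}) as, up to multiplication by $\Pi_\alpha$ and a constant translation, the Abel--Jacobi image of a neighborhood of $p_1(0)+p_2(0) \in \mathrm{Sym}^2(\mathcal{Q})$, and then invoke the classical Riemann vanishing theorem to identify this image with the theta divisor $\Theta_B$.

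First, I would normalize the basis of holomorphic differentials by setting $\eta := \Pi_\alpha^{-1}\omega$, so that $\int_{\alpha_k} \eta_j = \delta_{jk}$ and the full period matrix is $(I_3 \mid \tau)$ with $\tau = \Pi_\alpha^{-1}\Pi_\beta = iB$. With this normalization the paper's theta function $\theta(\mathbf{x},B)$ in (\ref{eq:theta}) is precisely the standard Riemann theta function $\theta(\mathbf{x},\tau)$: the factor $-i$ in (\ref{eq:riemannmatrix}) converts ``$B$ has positive-definite real part'' into ``$\tau$ has positive-definite imaginary part'' and rewrites the exponent as the classical $\pi i\, n^t\tau n + 2\pi i\, n^t\mathbf{x}$.

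Next, I would fix a base point $p_0 \in \mathcal{Q}$ and let $u \colon \mathrm{Sym}^2(\mathcal{Q}) \to \mathbb{C}^3$ denote the Abel--Jacobi map $u(p+q) = \int_{p_0}^p \eta + \int_{p_0}^q \eta$, well-defined modulo the period lattice $\mathbb{Z}^3 + \tau\mathbb{Z}^3$. Writing $\tilde{\Omega}_j(p) := \Pi_\alpha^{-1}\Omega_j(p) = \int_{p_j(0)}^p \eta$, the normalized parametrization becomes
$$\Pi_\alpha^{-1}\bigl(\Omega_1(p_1(s))+\Omega_2(p_2(t))\bigr) \,\,=\,\, u(p_1(s)+p_2(t)) \,-\, u(p_1(0)+p_2(0)).$$
As $(s,t)$ varies near $(0,0)$, the divisor $p_1(s)+p_2(t)$ sweeps an open neighborhood of $p_1(0)+p_2(0)$ in the complex surface $\mathrm{Sym}^2(\mathcal{Q})$, so the right-hand side parametrizes an open subset of the translate $u(\mathrm{Sym}^2(\mathcal{Q})) - u(p_1(0)+p_2(0))$. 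Now I would invoke Riemann's vanishing theorem \cite{Fay,Mum1}: there exists a Riemann constant $\kappa \in \mathbb{C}^3$, depending only on $p_0$ and the symplectic basis, such that on the universal cover one has $\Theta_B \,=\, u(\mathrm{Sym}^2(\mathcal{Q})) + \kappa$ modulo the period lattice, where the relevant effective divisors have degree $g-1 = 2$. Combining the identities gives $\Pi_\alpha^{-1}\mathcal{S} = \Theta_B - u(p_1(0)+p_2(0)) - \kappa$ locally, and setting $c := -\Pi_\alpha\bigl(u(p_1(0)+p_2(0)) + \kappa\bigr)$ yields $\mathcal{S} = \Pi_\alpha \cdot \Theta_B + c$.

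The main obstacle is not the chain of identifications, which is largely bookkeeping, but the appeal to Riemann's theorem itself together with careful matching of conventions: the sign relating $B$ and $\tau$, the $2\pi$ factor in (\ref{eq:theta}), and the precise normalization of the Riemann constant $\kappa$ for the chosen symplectic basis and base point. One must also verify that the local parametrization of $\mathrm{Sym}^2(\mathcal{Q})$ by $(s,t)$ truly fills an open piece of the theta divisor; since $\dim_\mathbb{C} \mathrm{Sym}^2(\mathcal{Q}) = 2 = \dim_\mathbb{C} \Theta_B$ and the Abel--Jacobi map is generically an immersion on $\mathrm{Sym}^{g-1}$ for a non-hyperelliptic curve of genus $3$, this holds provided the four collinear points $p_i(0)$ are generic, which is guaranteed by the transversality assumption on the line $\mathcal{L}(0)$.
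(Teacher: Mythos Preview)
Your proposal is correct and follows essentially the same route as the paper's own proof: both normalize the differentials via $\Pi_\alpha^{-1}$ to obtain $\eta$ with $\int_{\alpha_k}\eta_j=\delta_{jk}$, rewrite the parametrization as an Abel--Jacobi image of $p_1(s)+p_2(t)$ based at a fixed point, invoke the classical Riemann theorem from \cite{Mum1} to identify this image with $\Theta_B$ shifted by a Riemann constant $\kappa$, and then undo the base change to produce the vector $c$. Your additional remarks on matching conventions and on the Abel--Jacobi map being an immersion are sound but not strictly needed for the statement as phrased.
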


\begin{proof}
We outline how to obtain this result from the usual statement of Riemann's Theorem. 
Let $(\eta_1,\eta_2,\eta_3)$ be the basis of $H^0(\mathcal{Q},\Omega^1_{\mathcal{Q}})$ 
obtained from $(\omega_1,\omega_2,\omega_3)$ by 
coordinate change with the matrix $\Pi_{\alpha}^{-1}$. Then $\int_{\alpha_j}\eta_k = \delta_{kj}$ 
and $\int_{\beta_j}\eta_k = i \cdot B_{kj}$. Fix a point $r \in \mathcal{Q}$,
paths from $r$ to $p_1(0)$ and from $r$ to $p_2(0)$, and
 local coordinates $s,t$ on $\mathcal{Q}$ around $p_1(0)$ and $p_2(0)$.
For $s,t$ small, we consider paths from $r$ to $p_1(s)$ and from $r$ to $p_2(t)$. This gives an analytic~map
\begin{equation*} \label{eq:Omega1Omega2}
(s,t) \,\,\,\mapsto \,\,\,\begin{pmatrix} \int_{r}^{p_1(s)} \eta_1 \\ \int_{r}^{p_1(s)} \eta_2 \\ \int_{r}^{p_1(s)} \eta_3  \end{pmatrix}
\, +\, \begin{pmatrix} \int_{r}^{p_2(t)} \eta_1 \\ \int_{r}^{p_2(t)} \eta_2 \\ \int_{r}^{p_2(t)} \eta_3 \end{pmatrix}.
\end{equation*}
The familiar form of Riemann's theorem \cite[Theorem 3.1]{Mum1} shows that
 there is a constant $\kappa \in \mathbb{C}^3$ such that the image of this 
 map coincides with $\Theta_{B}-\kappa$. Now, it is enough to write 
\begin{equation*}
\int_{r}^{p_1(s)} \!\! \! \eta_j \,\,=\,\, \int_r^{p_1(0)}\!\!\! \eta_j \,\,+\,\, \int_{p_1(0)}^{p_1(s)}\!\!\!\eta_j \qquad 
{\rm and} \qquad \int_{r}^{p_2(t)}\!\!\! \eta_j \,\,=\,\, \int_r^{p_2(0)}\!\!\! \eta_j\,\, +\,\, \int_{p_2(0)}^{p_2(t)}\!\!\! \eta_j,
\end{equation*}
and then change the coordinates from the basis $(\hspace{-0.5mm}\eta_1,\hspace{-0.5mm}\eta_2,\hspace{-0.5mm}\eta_3\hspace{-0.5mm})$ back to the basis $(\hspace{-0.5mm}\omega_1,\hspace{-0.5mm}\omega_2,\hspace{-0.5mm}\omega_3\hspace{-0.5mm})$. 
\end{proof}

Our discussion shows that each theta surface is also a surface of double translation, and the Riemann theta function provides an implicit equation when the quartic $\mathcal{Q}$ is smooth. A fundamental result of Lie 
states that all \emph{nondegenerate} surfaces of double translation whose two parametrizations are \emph{distinct} arise in this way. 
Here {nondegenerate} and distinct are technical conditions.
The precise definition, phrased in modern language, can be found in~\cite[Definition 2.2]{Little83}. In particular, the nondegeneracy hypothesis assures that none of the generating curves 
can be a line. This rules out special surfaces such as cylinders or planes.

We recall briefly Lie's construction. Let
$\mathcal{S}=\mathcal{C}_1+\mathcal{C}_2=\mathcal{C}_3+\mathcal{C}_4$ be any surface of double translation in $\mathbb{C}^3$. Then we can identify the tangent lines to the curves $\mathcal{C}_j$ with points in $\mathbb{P}^2$, and taking all these tangent lines we obtain analytic arcs 
$\dot{\mathcal{C}}_1,\dot{\mathcal{C}}_2,\dot{\mathcal{C}}_3,\dot{\mathcal{C}}_4 \subset \mathbb{P}^2$.  If $\mathcal{S}$ is a theta surface then, by Remark \ref{rmk:quarticfromcurves}, all these arcs lie on a common quartic curve $\mathcal{Q}$. Lie proved that this property holds for all
nondegenerate surfaces of double translation~$\mathcal{S}$.

\begin{theorem}[Lie's Theorem] \label{thm:lie}
The arcs	$\,\dot{\mathcal{C}}_1,\dot{\mathcal{C}}_2,\dot{\mathcal{C}}_3,\dot{\mathcal{C}}_4\,$  lie on a common
reduced quartic curve $\mathcal{Q}$
in the projective plane $\PP^2$, and $\mathcal{S} $
 coincides with the theta surface associated to~$\mathcal{Q}$.
\end{theorem}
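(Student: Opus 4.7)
The proof naturally splits into two stages: (i) constructing a reduced quartic $\mathcal{Q}\subset\PP^2$ on which the four tangent arcs lie, and (ii) identifying $\mathcal{S}$ with the theta surface of that $\mathcal{Q}$. The second stage will follow readily from the material built up in this section; the first is the genuine analytic content of Lie's theorem.

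I would begin with local parametrizations $\alpha_i\colon\Delta\to\CC^3$ of the generating curves $\mathcal{C}_i$. The double-translation identity
\[
\alpha_1(s)+\alpha_2(t) \;=\; \alpha_3(u)+\alpha_4(v)
\]
defines $u,v$ implicitly as analytic functions of $(s,t)$, and differentiating in $s$ and $t$ gives
\[
\dot{\alpha}_1 \;=\; \dot{\alpha}_3\, u_s + \dot{\alpha}_4\, v_s,\qquad
\dot{\alpha}_2 \;=\; \dot{\alpha}_3\, u_t + \dot{\alpha}_4\, v_t.
\]
Projectively this says that the four tangent directions $p_1(s),p_2(t),p_3(u),p_4(v)\in\PP^2$ are collinear, cut out by a two-parameter family of chords $L_{s,t}$. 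The nondegeneracy hypothesis ensures that none of the arcs $\dot{\mathcal{C}}_j$ degenerates to a line and that the four points are generically distinct.

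The core technical step, and the main obstacle, is to upgrade these analytic tangent relations to an algebraic quartic. Following Lie, I would differentiate the collinearity constraint again and impose Clairaut-type compatibility of mixed partials $u_{st}=u_{ts}$ and $v_{st}=v_{ts}$; this produces a system of ODEs, common to all four arcs, whose integrability forces each $\dot{\mathcal{C}}_j$ to lie on a single algebraic curve $\mathcal{Q}\subset\PP^2$. A degree count coming from the requirement that a generic chord $L_{s,t}$ meet $\mathcal{Q}$ in exactly the four points $p_j$ pins the degree of $\mathcal{Q}$ at four, and the nondegeneracy hypothesis rules out excess multiplicity, so $\mathcal{Q}$ is reduced. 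This is the step historically supplied by Lie and refined by Poincar\'e and Wirtinger; a streamlined modern account using sheaf theory is \cite[Theorem 3.1]{Little83}, which I would cite rather than reprove in full.

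With $\mathcal{Q}$ in hand, Remark \ref{rmk:quarticfromcurves} run in reverse finishes the argument. Since the tangent direction to $\mathcal{C}_j$ at any point equals the corresponding point $p_j\in\mathcal{Q}$, integrating $\dot{\mathcal{C}}_j=p_j$ against the basis of holomorphic differentials (\ref{eq:omegabasis1}) identifies $\mathcal{C}_j$ with the abelian-integral arc $\Omega_j$ of (\ref{eq:omega}) up to an additive constant. Hence the parametrization $(s,t)\mapsto\alpha_1(s)+\alpha_2(t)$ of $\mathcal{S}$ agrees with the theta-surface parametrization (\ref{eq:stmap}) up to an affine shift of $\CC^3$; the second translation structure $\mathcal{C}_3+\mathcal{C}_4$ is automatically the Abel-dual parametrization of the same surface by (\ref{eq:stmapabel}). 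Therefore $\mathcal{S}$ coincides with the theta surface associated to $\mathcal{Q}$, as claimed.
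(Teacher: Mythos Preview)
The paper does not actually prove Theorem~\ref{thm:lie}; it states the result and immediately defers to the literature, remarking that Lie's original argument proceeds via ``a complicated system of differential equations satisfied by the parametrizations of the curves $\mathcal{C}_j$,'' that Darboux later gave a simpler proof, and that a modern treatment appears in Little \cite{Little83}. Your proposal is therefore not competing with a proof in the paper but rather supplying a sketch where the paper gives none, and your outline is fully compatible with the paper's description of Lie's approach: differentiate the double-translation identity to obtain collinearity of the four tangent directions, differentiate again and use integrability to force a common algebraic curve, then count intersections to fix the degree at four. Citing \cite{Little83} for the hard analytic step is exactly what the paper does.

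Two small points. First, a historical correction: the paper credits Darboux \cite{Darb}, not Wirtinger, with the simplified proof; Wirtinger does not appear in the paper's bibliography. Second, your stage~(ii) is slightly glib: knowing that $\dot{\alpha}_j(s)$ points in the direction $p_j(s)\in\PP^2$ only gives $\dot{\alpha}_j(s)=\lambda_j(s)\cdot\bigl(\omega_1(p_j),\omega_2(p_j),\omega_3(p_j)\bigr)$ for some scalar function $\lambda_j$, so identifying $\mathcal{C}_j$ with the abelian-integral arc $\Omega_j$ requires a reparametrization absorbing $\lambda_j$ into the local coordinate on $\mathcal{Q}$. This is routine but should be said; once it is, your reduction to \eqref{eq:stmap} and \eqref{eq:stmapabel} goes through as you describe.
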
  

Lie's original proof \cite{Lie} involves a complicated system of differential equations 
satisfied by the parametrizations of the curves $\mathcal{C}_j$. These differential equations force the arcs 
$\dot{\mathcal{C}}_j$ to lie on a quartic.  A much simpler proof was subsequently given by Darboux \cite{Darb}.
A modern exposition is found in Little's paper \cite{Little83}, together with generalizations to higher dimensions.   

\section{Symbolic Computations for Special Quartics}
\label{sec3}

There is a fundamental dichotomy in the study of theta surfaces, depending on the
nature of the underlying quartic in $\PP^2$. If it is a smooth quartic
then Riemann's Theorem \ref{thm:riemann} furnishes the defining equation
of the theta surface.
This is the case to be studied numerically in Section \ref{sec4}.
At the other end of
the spectrum are the singular quartics considered in the classical literature.
Here methods from computer algebra can be used to compute the theta surface. This is our
topic in the current section. Our focus lies on evaluating the 
 abelian integrals in~\eqref{eq:omega} by exact symbolic computations, 
 as opposed to numerical evaluations of the integrals.
 
We begin by explaining these methods for our running example from the Introduction.

\begin{example}[Scherk's minimal surface] \label{ex:scherkalgo1}
We here derive \eqref{eq:scherkpara1} from \eqref{eq:quartic1}. This serves as a first
illustration for Algorithm \ref{alg:thetaSurface2} below.
We start with the quartic $q(x,y)=xy(x^2+y^2+1)$. This is the
dehomogenization of \eqref{eq:quartic1} with respect to $z$. Using the
partial derivatives $q_y(x,y)=x^3+3xy^2+x$ and $q_x(x,y)=3x^2y + y^3 + y$,
we compute the differential forms  in
(\ref{eq:omegabasis1}) and (\ref{eq:omegabasis2}).

We choose to evaluate the integrals in (\ref{eq:omega})
over the lines  $y=0$ and $x=0$. These will give us the two  summands in 
the parametrization (\ref{eq:stmap}). The first summand 
is obtained by setting $x=s$ and $y=0$, so that $q_y(s,0)=s(s^2+1)$,
and by computing the antiderivatives of the resulting specialized forms $\omega_j(s,0)$
for $j=1,2,3$. The three coordinates of $\Omega_1(p_1(s))$ are
$$ \begin{matrix}
\Omega_{11}(p_1(s)) & =
&\int\frac{s}{s(s^2+1)}d s &=& \arctan(s), \smallskip \\
\Omega_{21}(p_1(s)) &  =
& \int \frac{0}{s(s^2+1)}d s & =&  0,  \smallskip \\
\Omega_{31}(p_1(s)) & =
& \int \frac{1}{s(s^2+1)} d s & =&  \log(s) - \frac{1}{2}\log(s^2+1).
\end{matrix}
$$
The second summand is obtained by integrating over the line $x=0$,
with parameter $y=t$, so that $q_x(0,t)=t(t^2+1)$ in (\ref{eq:omegabasis2}).
The three coordinates of $\Omega_2(p_2(t))$ are found to be
$$ \begin{matrix}
\Omega_{12}(p_2(t)) & =
& - \int \frac{0}{t(t^2+1)}d t & = &  0, \smallskip \\
\Omega_{22}(p_2(t)) & =
& - \int\frac{t}{t(t^2+1)}d t & = &  -\arctan(t), \smallskip  \\
\Omega_{32}(p_2(t)) & =
&- \int\frac{1}{t(t^2+1)}d t  & = &  -\log (t)+\frac{1}{2}\log(t^2+1).
\end{matrix}
$$
By adding these integrals, we obtain the parametrization of the corresponding theta surface:
\begin{equation}
\label{eq:scherkparara}
X \,=\, \arctan(s)\,,\quad
 Y\,=\, -\arctan(t)\,,\quad
Z\,=\, \log \Big(\frac{s}{\sqrt{s^2+1}}\Big) -\log \Big(\frac{t}{\sqrt{t^2+1}}\Big).
\end{equation}
This is  Scherk's minimal surface (\ref{eq:scherkpara1}). Trigonometry now yields
the implicit equation  in (\ref{eq:scherk}).
\end{example}

\smallskip

The following algorithm summarizes the steps we have performed in Example \ref {ex:scherkalgo1}.
Starting from the quartic curve in $\PP^2$, we compute the two generating curves of
the associated theta surface in $\CC^3$. This is done by evaluating the 
integrals in (\ref{eq:omega}) as explicitly as possible.

 \medskip

 \begin{algorithm}[H]\label{alg:thetaSurface2}
    \KwIn{ A quartic equation $q(x,y)$ describing a reduced 
plane quartic curve.}
    \KwOut{The parametrization (\ref{eq:stmap}) of the theta surface $\mathcal{S}$ in affine $3$-space.}
     \KwSty{Step 1:} Specify two points $p_1$ and $p_2$ on the quartic. \\
	\KwSty{Step 2:} Fix local parameters $(x_1,y_1)$ and $(x_2,y_2)$ 
	 around $p_1$ and $p_2$ respectively. \\
          \KwSty{Step 3:} Write $y_j$ as an algebraic function in $x_j$ on its branch. \\
	 \KwSty{Step 4:} Compute the partial derivative $q_y$ on the two branches. \\
    \KwSty{Step 5:} Substitute $x_1=s$ and $x_2=t$ into the differential forms  
    $\omega_1,\omega_2,\omega_3$ in (\ref{eq:omegabasis1}). \\
    \KwSty{Step 6:} By integrating these differential forms,    compute the vectors 
    \begin{equation*} 
     \begin{matrix}
\Omega_1(p_1(s)) & = &   (\int \frac{s}{q_y(s,y_1(s))} ds , \int \frac{y_1(s)}{q_y(s,y_1(s))} ds , \int \frac{1}{q_y(s,y_1(s))} ds),  \\ 
\Omega_2(p_2(t)) & = &
     (\int \frac{t}{q_y(t,y_2(t))} dt,\int \frac{y_2(t)}{q_y(t,y_2(t))} dt,\int \frac{1}{q_y(t,y_2(t))} dt).
     \end{matrix}
\end{equation*}
\hspace{-1.8mm}
         \KwSty{Step 7:} 
         Output the sum $\Omega_1(p_1(s))+\Omega_2(p_2(t))$ of the generating curves as in~(\ref{eq:stmap}).
    \caption{Computing the parametrized theta surface from its plane quartic}
\end{algorithm}  

\medskip

One important part of Algorithm \ref{alg:thetaSurface2}
is Step 3, where $y_j$ is 
represented as an algebraic function in $x_j$. 
This function has algebraic degree at most four,
so it can be written in radicals. After all, the steps above  are meant as a symbolic algorithm.
However, we found the representation in radicals to be 
infeasible for practical computations unless the quartic is very special. 
Even more crucial is the computation of the indefinite integrals in Step 6. This can be done explicitly whenever the quartic is reducible and all the components are
rational: in that special case, the holomorphic differentials of (\ref{eq:omegabasis1}) restrict to a differential $\frac{f(t)}{g(t)}dt$ on $\mathbb{P}^1$ where $f(t),g(t)$ are polynomials, and any such expression can be integrated symbolically.

In what follows we focus on instances where the quartic $q(x,y)$ is reducible.
 A reducible plane quartic is one of the followings: four straight lines, 
 a conic and two straight lines, two conics, or a cubic and a line. In the sequel, we show the computations of
  such theta surfaces using Algorithm \ref{alg:thetaSurface2}.
   The first three cases were worked out by Richard Kummer~\cite{Kum} in his thesis, and the latter case was
   studied by Georg Wiegner~\cite{Wie}. We here present three further examples of non-algebraic theta surfaces. The algebraic ones will be discussed in Section~\ref{sec6}. 

Consider the first three of the four cases above. Then $q(x,y)$ factors into two conics.
The two conics meet in four points in $\PP^2$. We consider the pencil of conics
through these~points. 

\begin{remark} \label{rmk:onsameconic}
A result due to Lie  states that 
the theta surface $\mathcal{S}$
for a product of two conics only depends on their pencil, provided
$p_1$ and $p_2$  lie on the same conic (cf.~\cite[Section 3]{Kum}).
Hence $\mathcal{S}$ has infinitely many distinct 
representations $\mathcal{C}_1+\mathcal{C}_2$ as a translation surface.
We shall return to this topic  in Theorem \ref{thm:lietetra},
 where it is shown how to compute these representations.
\end{remark}

For instance, for Scherk's surface in Example~\ref{ex:scherkalgo1},
the four points in $\PP^2$ are $(i:0:1),(-i:0:1),(0:i:1),(0:-i:1)$, 
and the pencil is generated by $xy$ and $x^2+y^2+z^2$.
We can replace these two quadrics by any other pair in the pencil
and obtain two generating curves.

We now examine another case which is similar. It will lead to the theta surface in 
(\ref{eq:fourterms}).

\begin{example} \label{eq:amoebaofline}
	Fix the four points $(1:0:0),(0:1:0),(0:0:1),(1:1:1)$ in $\PP^2$. Their pencil of conics
	is generated by $y(x-z)$ and $(x-y)z$. We multiply the first conic with a general member of the pencil to get
	the quartic $Q=y(x-z) \cdot ((1+\lambda )xy-xz-\lambda yz)$.  Here $\lambda $ is a parameter,
	which we included in order to illustrate Remark \ref{rmk:onsameconic}.
	Dehomogenizing $Q$, we get $q=y(x-1)((1+\lambda)xy-x-\lambda y)$. We compute
	$\omega_1,\omega_2,\omega_3$ from the partial derivatives 
	\begin{align*}
	&q_x\,=\,y((1+\lambda)xy-x-\lambda y)+y(x-1)((1+\lambda)y-1)),& \\
	&q_y\,=\,(x-1)((1+\lambda)xy-x-\lambda y)+y(x-1)((1+\lambda)x-\lambda).&
	\end{align*}
	We integrate over the two lines given by $y(x-1)=0$. On the first line
	$\{y=0\}$, we have $q_y=-x(x-1)$. The indefinite forms of the three abelian integrals
	in $\Omega_1(p_1(s))$  are
	$$ \begin{matrix} \int\frac{s}{-s(s-1)}d s \,=\, \log \left(\frac{1}{s-1}\right)\,,\quad
	\int \frac{0}{-s(s-1)}d s\,=\, 0  \,,\quad
	\int \frac{1}{-s(s-1)} d s\,=\, \log \bigl(\frac{s}{s-1} \bigr). 
	\end{matrix}
	$$
	For the line $\{x-1=0\}$ we transform the differentials by
	passing from  \eqref{eq:omegabasis1} to \eqref{eq:omegabasis2}. We~find
	$$ \begin{matrix}
	\int \frac{1}{-t(t-1)}d t = \log \bigl( \frac{t}{t-1} \bigr) \,,\quad
	\int \frac{t}{-t(t-1)}d t= \log\left(\frac{1}{t-1}\right)  \, , \quad
	\int \frac{1}{-t(t-1)}d t = \log \bigl( \frac{t}{t-1} \bigr).
	\end{matrix}
	$$
	We conclude that the resulting theta surface has the parametric representation
	\begin{equation}\label{thetaSurfex1}
	\begin{matrix}
	&X & = &  \log \left(\frac{1}{s-1}\right)\,+\,\log \bigl( \frac{t}{t-1} \bigr),\smallskip \\
	& Y & = &  \qquad 0 \,\,\,\, + \,\,\,\log\left(\frac{1}{t-1}\right),  \smallskip \\
	&Z & = &  \log \bigl(\frac{s}{s-1} \bigr) \,+\,\log \bigl( \frac{t}{t-1} \bigr) .
	\end{matrix}
	\end{equation}
\end{example}

\begin{remark} \label{rmk:implicitization}
	The output of Algorithm  \ref{alg:thetaSurface2} looks
	like (\ref{eq:scherkparara}) or (\ref{thetaSurfex1}). It gives the
	theta surface $\mathcal{S}$ in parametric form. Whenever the quartic is \emph{rational nodal}, meaning that
	all the components are rational and with at most nodes as singularities,
	as in Figure \ref{fig:nodalquartics},
	 the expressions
	found for $X$, $Y$ and $Z$ are $\CC$-linear combinations
	of logarithms of linear functions in $s$ and $t$. Indeed, since the singularities are nodal, 
	the differentials of \eqref{eq:omegabasis1} restrict to each rational component $\Gamma\cong \mathbb{P}^1$
	as meromorphic differentials with at most simple poles \cite[Chapter 2]{ACG}. Each such differential can be written as a 
	sum of terms of the form $\frac{1}{(t-a)}$, which integrate to $\log(t-a)$.

	From a representation as a $\mathbb{C}$-linear combination of logarithms we  find an implicit equation for
	$\mathcal{S}$ by familiar elimination techniques 
	from symbolic computation, such as {\em resultants} or {\em Gr\"obner bases}.
	Namely, we choose  constants $\alpha, \beta, \gamma \in \CC$ such that
	${\rm exp}(\alpha X)$, ${\rm exp}(\beta Y)$ and ${\rm exp}(\gamma Z)$
	are written as rational functions in $s$ and $t$, and we then eliminate
	$s$ and $t$ to obtain a trivariate polynomial $\Psi(u,v,w)$ such that
	$\mathcal{S}$ is defined locally~by
	\begin{equation}
	\label{eq:Psieqn}
	\Psi \bigl( \,{\rm exp}(\alpha X), \,{\rm exp}(\beta Y),\,{\rm exp}(\gamma Z)\, \bigr) \,\,\, = \,\,\, 0 . 
	\end{equation}
	For the output (\ref{eq:scherkparara}), we take $\alpha = \beta = i$, $\,\gamma = 1$ and
	$\,\Psi =  uv^2w-u^2v-uw+v $. With these choices, (\ref{eq:Psieqn})  
	is precisely the implicit
	equation (\ref{eq:scherk}) of Scherk's surface, times a constant.
	
	For Example~\ref{eq:amoebaofline}, the implicit equation is
	especially nice. The output (\ref{thetaSurfex1}) suggests the choices
	$\alpha=\beta=\gamma=1$ and $\,\Psi = u+v-w+1$, and hence the theta surface  is given by
	\begin{equation} \label{eq:fourterms} {\rm exp}(X) \,+\, {\rm exp}(Y) \,-\, {\rm exp}(Z) \,\,= \,\,-1 .
	\end{equation}
	An explanation for the occurrence of such exponential sums is offered
	in Section~\ref{sec5}. 
\end{remark}

We present two more  illustrations of our 
methodology for special quartics. In each case we carry out both
 Algorithm \ref{alg:thetaSurface2}
and the subsequent implicitization step as in Remark~\ref{rmk:implicitization}.

\begin{example}
Consider the quartic $q=xy(1-x^2+y^2)$. This corresponds to the pencil of conics
through $(0:i:1),(0:-i:1),(1:0:1),(-1:0:1)$. 
For the abelian integrals, we compute the partial derivatives $q_x=y(y^2-x^2+1)-2x^2y$ and $q_y=x(y^2-x^2+1)+2xy^2$. 

We first integrate the differential forms in (\ref{eq:omegabasis2}) over the line $y=0$, with local parameter $x=s$. 
On this component, $q_x=x(1-x^2)$.  The indefinite integrals are found to be
$$  \begin{matrix}
\! \int \!\! \frac{s}{-s(s^2-1)}d s = \frac{1}{2}\log (\frac{{s{+}1}}{{s{-}1}})), \quad \quad
 \int \!\! \frac{0}{-s(s^2-1)}d s = 0, \quad \quad
 \int \!\! \frac{1}{-s(s^2-1)} d s = \log(s) - \!\frac{1}{2} \log (s^2{-}1).
 \end{matrix}
$$
We next integrate over the line $x=0$, using 
(\ref{eq:omegabasis1}) with $q_y=y(y^2+1)$.
   The abelian integrals~are
$$  \begin{matrix}
\int\frac{0}{-t(1+t^2)}d t \,=\, 0 \, , \quad
 \int \frac{t}{-t(1+t^2)}d t \,=\, -\arctan (t) \, , \quad
 \int \frac{1}{-t(1+t^2)} d t \,=\, -\log(t) \,+\, \frac{1}{2} \log (1+t^2).
  \end{matrix}
$$
We conclude that the output of Step 7 in Algorithm \ref{alg:thetaSurface2} equals
\begin{align*}
&X \,\,=\,\, \frac{1}{2} \log (s+1) - \frac{1}{2} \log (s-1) , \nonumber\\
& Y \,\,=\,\,  -\arctan (t) \, = \, -\frac{1}{2i} \log (t-i) + \frac{1}{2i} \log(t+i), \\
&Z\,\,=\,\, \log(s)- \frac{1}{2} \log (s^2-1)-\log(t) + \frac{1}{2} \log (1+t^2) \nonumber. 
\end{align*}
From this we find that the implicit equation of the theta surface equals

$$
-2 \,{\rm exp}(Z)\,{\rm exp}(X)\,{\rm sin}(Y) \, - \, {\rm exp}(2X) \,- \,1 =0.
$$

\end{example}

Finding the two pairs of generating curves on a theta surface is particularly pleasant
when the underlying quartic is the union of four lines in $\PP^2$ that
are defined over~$ \QQ$.

\begin{example} \label{ex:fournicelines}
Consider the quartic $q = (y+x-1)(y-x-1)(y+x+1)(y-x+1)$.
For the first pair of generating curves,
we compute the abelian integrals  over the first two lines:
$$ \begin{matrix}
\int \!  \frac{1}{8(s - 1)}d s \,=\, \frac{1}{8}\log(s-1) ,& 
 \int \!  - \frac{1}{8s}d s \,=\, -\frac{1}{8}\log(s)  ,&
 \int \!   \frac{1}{8s(s-1)} d s \,=\, \frac{1}{8} \log (s -1)- \frac{1}{8} \log (s),  \\
\int \!  \frac{1}{8(t + 1)}d t \,=\, \frac{1}{8} \log(t + 1) ,& 
 \int \!   \frac{1}{8t}d t= \frac{1}{8} \log(t)  ,& 
 \int \!   \frac{1}{8t(t+1)} d t \,=\, - \frac{1}{8}\log(t + 1) + \frac{1}{8} \log(t).
 \end{matrix}
$$
Hence, the coordinates of the theta surface are given in terms of the parameters $s$ and $t$ by
\begin{align*}\label{thetaSurfaceEx3}
& 8X \,\,=\,\, \log(s - 1)+\log(t + 1), \nonumber\\
& 8Y\,\,=\,\,  -\log(s)\,+\,\log(t),  \\
&8Z\,\,=\,\, \log(s - 1) - \log(s)-\log(t + 1) + \log(t) \nonumber. 
\end{align*}
Choosing $\alpha=\beta=\gamma=8$ in Remark \ref{rmk:implicitization},
and hence  $\,u={\rm exp} (8X),\,v={\rm exp} (8Y)$ and $ w={\rm exp} (8Z)$, the 
implicit equation (\ref{eq:Psieqn}) of the theta surface is represented by the polynomial
$$ \Psi(u,v,w) \,\, = \,\, u^2 v w^2-2 u^2 v w-u v^2 w+u v w^2+u^2 v-4 u v w-v^2 w+u v-u w-2 v w -w .$$
The same method works fairly automatically for any arrangement of four lines in the plane.
\end{example}

\section{Degenerations of Theta Functions}
\label{sec5}

We saw in Theorem \ref{thm:riemann}
that the equation of the theta surface 
associated with a smooth plane quartic is a
Riemann theta function. However, all 
theta surfaces seen in the classical literature were computed
from quartics that are singular. In this section we explain
how  singularities induce  degenerate theta functions.
These are  finite sums of exponentials, given combinatorially by the
cells in the associated Delaunay subdivision of~$\mathbb{Z}^3$.
This furnishes a conceptual explanation of the
equations defining theta surfaces like (\ref{eq:scherk})~or~(\ref{eq:fourterms}).
Our approach is from the point of tropical geometry,
which mirrors the theory of toroidal 
degenerations~\cite{GruHu}.

In what follows we focus on the \emph{rational nodal quartic curves}.
These are quartic curves whose irreducible components are rational.
The special properties of such curves were already discussed briefly in Remark \ref{rmk:implicitization}.
Rational nodal quartics
appear in five types: an irreducible quartic with three nodes, a nodal cubic together with a line, two smooth conics, a smooth conic  with two lines, and
an arrangement of four lines.


\begin{figure}[]
\begin{center}$
\begin{array}{lllll}
\includegraphics[width=20mm]{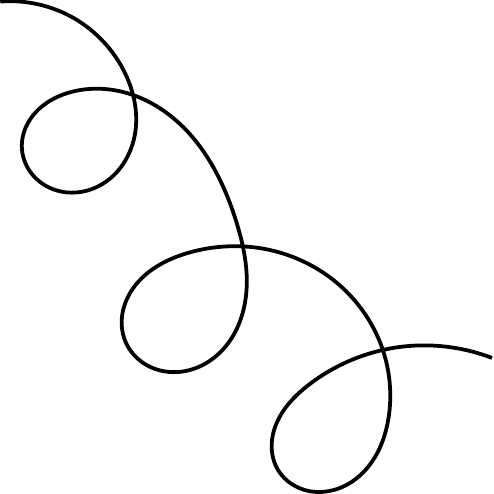}&
\includegraphics[width=20mm]{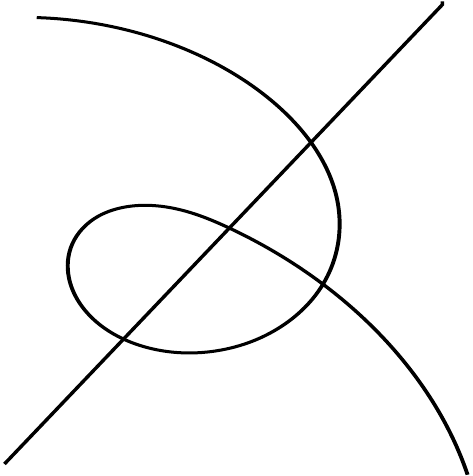}&
\includegraphics[width=20mm]{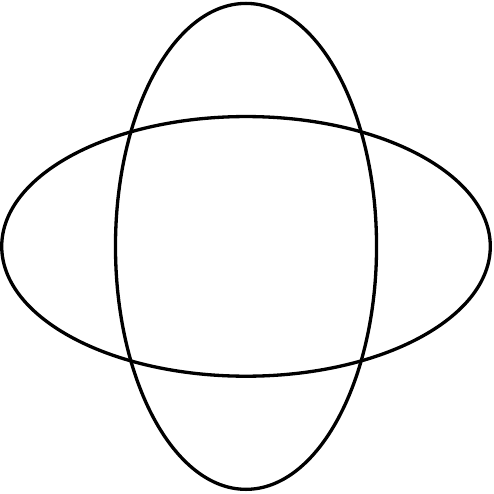}&\,\,
\includegraphics[width=12mm]{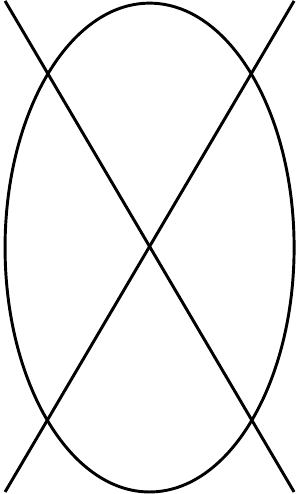}&\,\,
\includegraphics[width=20mm]{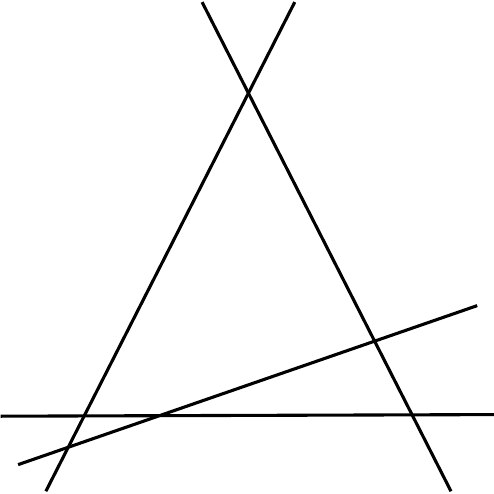}
\end{array}$
\end{center}
\caption{The five types of rational nodal quartics.
\label{fig:nodalquartics}}
\end{figure}

To each of these curves we associate its dual graph $\Gamma = (V,E)$.
This has a vertex for each irreducible component and one edge for each intersection point between two components. A node on an irreducible component counts as an intersection of that component with itself. 


\begin{figure}[h]
\begin{center}$
\begin{array}{lllll}
\includegraphics[width=20mm]{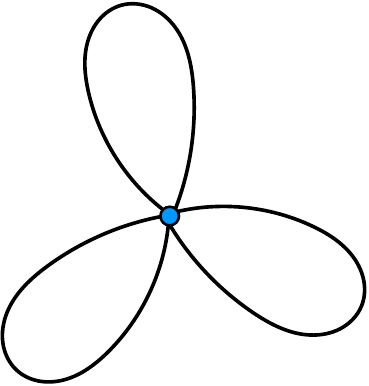}&
\includegraphics[width=20mm]{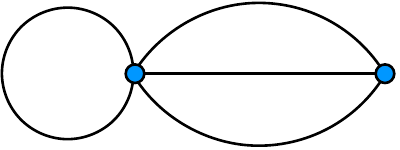}&
\includegraphics[width=20mm]{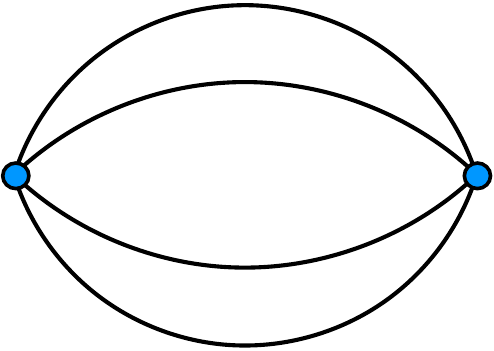}&
\includegraphics[width=20mm]{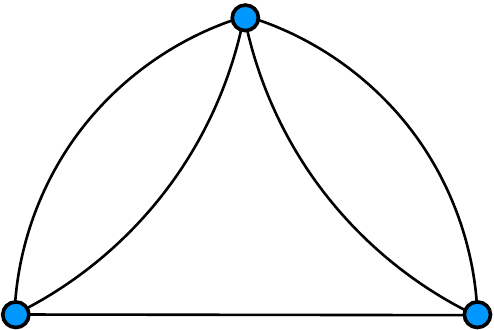}&
\includegraphics[width=20mm]{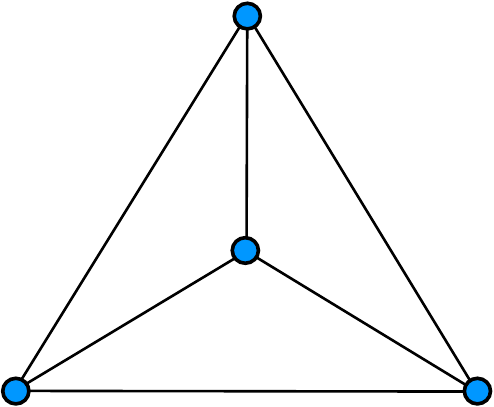}
\end{array}$
\end{center}
\caption{The dual graphs corresponding to the five types of rational nodal quartics.
\label{fig:graphs}}
\end{figure}

The dual graphs play a key role in tropical geometry,
namely in the tropicalization of curves and their Jacobians.
We follow the combinatorial construction in \cite[Section 5]{BBC}.
Fix one of the graphs $\Gamma$ in Figure~\ref{fig:graphs}
with an orientation for each edge.  The first homology group
\[ H_1(\Gamma,\mathbb{Z}) \,\,=\,\,
 \ker \left( \partial\colon \mathbb{Z}^{E} \to \mathbb{Z}^{V} \right)  \]
is free abelian of rank $3$.
Let us fix a $\mathbb{Z}$-basis  $\gamma_1,\gamma_2,\gamma_3$ for 
$H_1(\Gamma,\mathbb{Z})$. Each $\gamma_j$ is a directed
cycle in $\Gamma$.  We encode our basis in a $3 \times |E|$ matrix  $\Omega$.
The entries in the $j$-th row of $\Omega$ are the coordinates
 of $\gamma_j$ with respect to the standard basis of $\mathbb{Z}^E$.  
Our five matrices $\Omega$~are
$$ \!
\begin{bmatrix} 1 & 0 & 0 \\  0 & 1 & 0 \\ 0 & 0 & 1 \end{bmatrix}\! ,\,
\begin{bmatrix} 1 & \!\! \!-1 & 0 & 0 \\ 0 & 1 & \!\! -1 & 0 \\ 0 & 0 & 0 & 1  \end{bmatrix}\! ,  \,
\begin{bmatrix} 1 & \!\! \! -1 & 0 & 0 \\ 0 & 1 & \!\! -1 & 0 \\ 0 & 0 & 1 & \!\! -1  \end{bmatrix} \! , \,
\begin{bmatrix} 1 &  0 &  1 &  0 &  0 \\ \! -1 &  1 &  0 & \!\! -1 &  0 \\  0 & \!\!\! -1 &  0 &  0 &  \!\! -1  \end{bmatrix}\! , \,
\begin{bmatrix} 1 & \!\! \!-1 &  0 &  1 &  0 &  0 \\ \! -1 &  0 &  1 &  0 & \!\! -1 &  0 \\  0 &  1 
&\! \!\! -1 &  0 &  0 &  1  \end{bmatrix}\!. $$
We define the \emph{Riemann matrix} of  $\Gamma$ to be
the positive definite symmetric $3 \times 3$ matrix
\begin{equation}
\label{eq:periodtropical}
B \,\, := \,\,\Omega \cdot \Omega^T.
\end{equation} 
If we change the orientations and cycle bases then the matrix
$B$ transforms under the action of ${\rm GL}(3,\mathbb{Z})$ by
conjugation. The Riemann matrices of our five graphs are
the matrices in the fourth column in  Figure~\ref{fig:vallentin}.
The label \ {\tt Form} \ refers to the quadratic form
represented by~$B$.

\begin{figure}
  \includegraphics[scale=0.83]{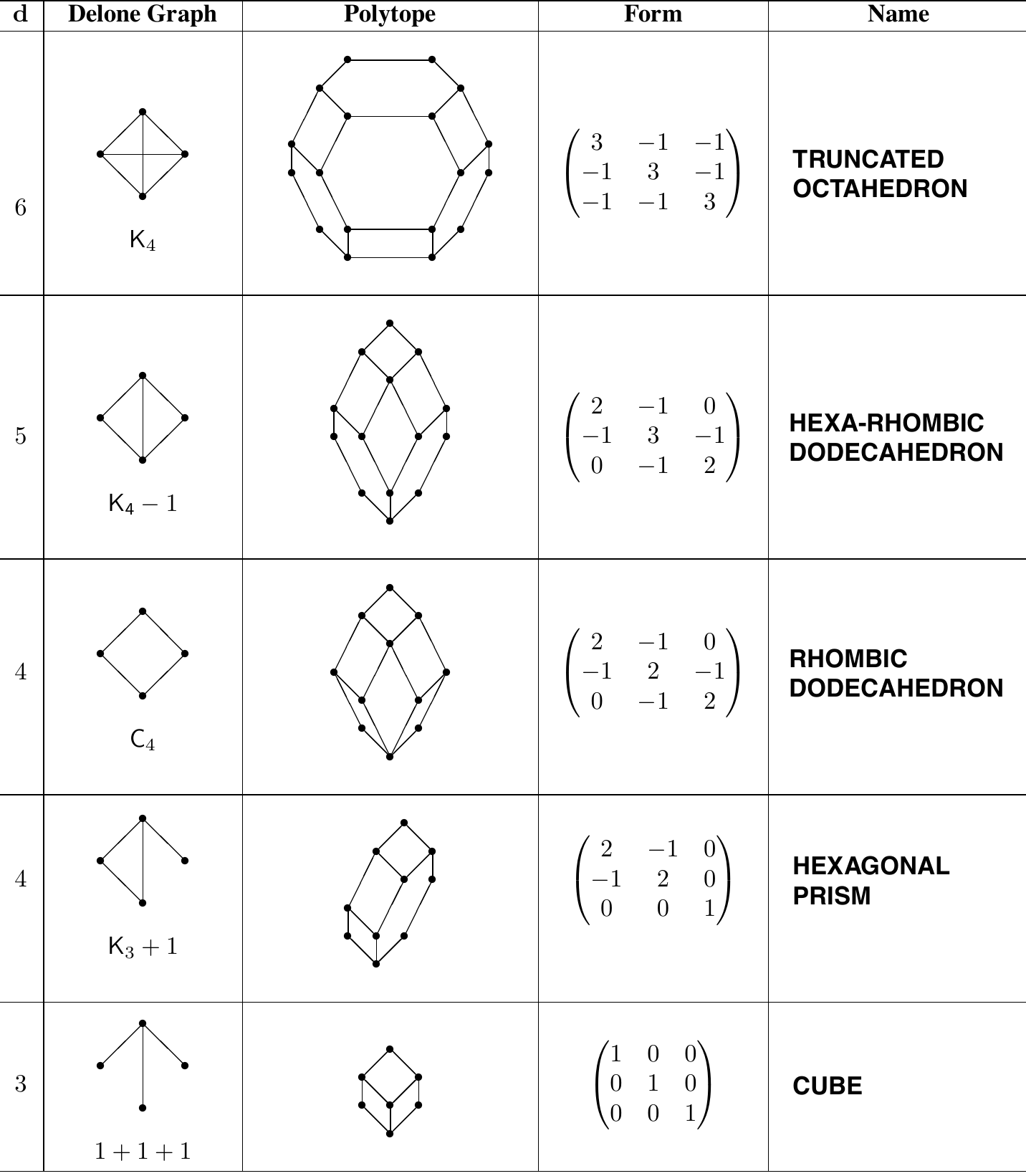}
    \caption{The Voronoi polytopes corresponding to the
  five types of rational nodal quartics, in reverse order to 
  Figure \ref{fig:graphs}.  This diagram is taken
  from the dissertation of Frank Vallentin \cite[Section 4.3, page 49]{Val}.
  The first column gives the number $d$ of edges in the dual graph.
\label{fig:vallentin}  }
\end{figure}

We now explain how a Riemann matrix $B$ of
the dual graph as in \eqref{eq:periodtropical}
induces a degeneration to a singular curve with dual graph 
as in Figure~\ref{fig:graphs}. Let $B_0$ be a fixed
(real or complex) symmetric $3 \times 3$ matrix with positive definite real part.
Consider the one-parameter family of classical Riemann matrices
\begin{equation}
\label{eq:limitfamily}
B_t \,\,:=\,\, tB+B_0 \qquad \text{for } \,\, t\geq 0.
\end{equation} 
We note that the real part of $B_t$ is always positive definite. 
If the chosen matrix $B_0$ does not belong to the hyperelliptic locus, then
the set of positive real numbers $t$ such that $B_t$ lies on the hyperelliptic locus is discrete. Thus, almost all Riemann matrices $B_t$ correspond to non-hyperelliptic curves of
genus three, and hence to smooth quartic curves in the plane.

 We consider the Riemann theta functions for these curves. More precisely, we 
 evaluate $\theta( \,\bullet\,, B_t)$ at ${\bf x}$ translated 
    by the vector $i\cdot tB\mathbf{a}$, where $\mathbf{a} = (a_1,a_2,a_3)^T \in \mathbb{R}^3$. This gives
$$ \begin{matrix}
\theta \bigl( \mathbf{x}-i\cdot tB\mathbf{a},B_t \bigr) & = & \sum_{n\in \mathbb{Z}^3} \mathbf{e}\left(-\frac{1}{2}n^T(tB+B_0) n + i\cdot n^T\left(\mathbf{x}-i\cdot tB\mathbf{a}\right) \right) \medskip \\
& =  & \sum_{n\in \mathbb{Z}^3}\mathbf{e} \left( - \frac{1}{2}\left(n^T Bn-2n^TB\mathbf{a} \right)t - \frac{1}{2}n^TB_0 n + i\cdot n^T\mathbf{x}\right) \medskip \\
& = & \sum_{n\in \mathbb{Z}^3} \mathbf{e}\left(-\frac{1}{2}(n^T Bn-2n^TB\mathbf{a})t \right)\cdot \mathbf{e}\left( - \frac{1}{2}n^TB_0n+i\cdot n^T\mathbf{x}  \right) \label{eq:thetalimit}.
\end{matrix}
$$
As $t\to +\infty$,
the term $\mathbf{e}\left(-\frac{1}{2}(n^T Bn-2n^TB\mathbf{a})t \right)$ converges
 if and only if  $\,n^TBn-2n^TB\mathbf{a}\geq 0$.
For each $n \in \mathbb{Z}^3$, this condition is a linear inequality in ${\bf a}$, which can be 
rewritten as follows:
\begin{equation}
\label{eq:voronoicell}
\mathbf{a}^T B \mathbf{a} \,\,\,\leq \,\,(\mathbf{a}-n)^TB(\mathbf{a}-n).
\end{equation}
Hence, in order for the theta function above to converge to a degenerate theta
function, we  must choose  $\mathbf{a}$ in such a way  that \eqref{eq:voronoicell} is satisfied for every $n\in \mathbb{Z}^3$. The positive definite quadratic form given by $B$
defines a metric on $\mathbb{R}^3$. The condition \eqref{eq:voronoicell} 
says that, among all lattice points $n$ in $\mathbb{Z}^3$,
 the origin is a closest one to $\mathbf{a}$ in this metric.
 This means that   $\mathbf{a}$ is contained in the {\em Voronoi cell} 
 with respect to the lattice $\mathbb{Z}^3$ and the metric defined by~$B$. 
 
 The Voronoi cell is a $3$-dimensional polytope. It belongs to the
 class of {\em unimodular zonotopes}.  
The third column of Figure  \ref{fig:vallentin}
 shows this for each of the five types of nodal
 quartics in Figure \ref{fig:nodalquartics}. The translates of the Voronoi cell 
 by vectors in $\mathbb{Z}^3$
 define a tiling of $\mathbb{R}^3$. Their boundaries
 form an infinite $2$-dimensional polyhedral complex.
 See \cite[Figure 15]{BBC} for the case of four lines, on the right
 in Figures \ref{fig:nodalquartics} and \ref{fig:graphs}.
 This surface is the {\em tropical theta divisor} in $\mathbb{R}^3$.
 It can be viewed as a polyhedral model for our
degenerate theta surface. For details on these objects 
see \cite{BBC,Chan} and references therein.
We encourage our readers to~spot Figure \ref{fig:graphs} in \cite[Figures 1 and 8]{Chan} and to
examine the tropical Torelli map in \cite[Theorem 6.2]{Chan}.

We now assume that $\mathbf{a}$ is a vertex of the Voronoi cell.
We write  $\mathcal{D}_{{\bf a},B}$ for the set of all vectors
$n \in \mathbb{Z}^3$ for which equality holds in (\ref{eq:voronoicell}).
This set is finite for each Riemann matrix $B$ derived from a graph in
Figure~\ref{fig:graphs}. We note the following behavior for the summands above:
\begin{equation}
\label{eq:summandsabove}
\,\mathbf{e}\biggl(-\frac{1}{2}(n^T Bn-2n^TB\mathbf{a})t \biggr) 
\,\,\, \xrightarrow{t \rightarrow +\infty} \,\,\,
\begin{cases}\, \,1 & {\rm if} \,\, n \in \mathcal{D}_{{\bf a},B},
\\ \,\,0 & {\rm if}\,\, n \in \mathbb{Z}^3 \backslash \mathcal{D}_{{\bf a},B}. \end{cases} 
\end{equation}
From this we shall infer the following theorem, which is our main result in this section.

\begin{theorem}\label{thm:degtheta}
Fix a vertex ${\bf a}$ of the Voronoi cell for the 
degeneration (\ref{eq:limitfamily}). The associated theta function 
is the following finite sum over all vertices of the Delaunay polytope dual to~${\bf a}$:
\begin{equation}
\label{eq:finitethetasum}
  \sum_{n\in \mathcal{D}_{{\bf a},B}} 
   \mathbf{e}\left( - \,\frac{1}{2}n^TB_0n\,+\,i\cdot n^T\mathbf{x}  \right) .
\end{equation}
The number of summands in (\ref{eq:finitethetasum}) equals $\,8$ for a rational quartic,
 $6$ for a nodal cubic plus line, $4$ or $6$ for two conics, 
$4$ or $5$ for a conic plus two lines,  and $4$ for four lines.
This recovers the equations for theta surfaces given by Eiesland in
\cite[eqns.(5),(6)]{Eies08} and \cite[eqn.(5)]{Eies09}.
\end{theorem}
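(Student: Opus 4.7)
My plan is to separate the theorem into three parts: (i) the analytic derivation of the limit formula (\ref{eq:finitethetasum}); (ii) the combinatorial identification of $\mathcal{D}_{\mathbf{a},B}$ with the vertex set of the Delaunay polytope dual to $\mathbf{a}$, and the counting of these vertices for each of the five graphs; (iii) the comparison with Eiesland's formulas.

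For part (i), the key computation is already displayed in the paragraph preceding the theorem. Starting from the expansion
\begin{equation*}
\theta(\mathbf{x}-i t B\mathbf{a},B_t) \,=\, \sum_{n\in\mathbb{Z}^3} \mathbf{e}\!\left(-\tfrac{1}{2}(n^TBn-2n^TB\mathbf{a})t\right)\cdot \mathbf{e}\!\left(-\tfrac{1}{2}n^TB_0 n + i\, n^T\mathbf{x}\right),
\end{equation*}
the pointwise limit of each summand is given by (\ref{eq:summandsabove}). To pass the limit inside the sum I would invoke dominated convergence: since $\mathbf{a}$ lies in the Voronoi cell, the quadratic function $n\mapsto n^TBn-2n^TB\mathbf{a}$ is nonnegative on $\mathbb{Z}^3$, so for $t\geq 1$ every summand is bounded in absolute value by $\exp\!\left(-\tfrac{1}{2}\operatorname{Re}(n^T B_0 n)\right)$, which is summable over $\mathbb{Z}^3$ because $\operatorname{Re}(B_0)$ is positive definite. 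This gives the finite sum (\ref{eq:finitethetasum}), where only the lattice points achieving equality in (\ref{eq:voronoicell}) survive.

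For part (ii), the equality case of (\ref{eq:voronoicell}) says precisely that $\mathbf{a}$ is equidistant (in the $B$-metric) from $0$ and from $n$, so that $\mathcal{D}_{\mathbf{a},B}$ is the set of lattice points whose Voronoi cells contain $\mathbf{a}$. By Voronoi--Delaunay duality, this is the vertex set of the Delaunay polytope dual to $\mathbf{a}$. The number of summands is therefore the number of vertices of that Delaunay polytope. To enumerate them, I would read off the Voronoi cells from the third column of Figure~\ref{fig:vallentin}, take their polar duals, and count the vertices of the resulting Delaunay polytopes for each vertex orbit of the Voronoi cell. For the generic rational quartic (with the identity form and dual graph $3C_1$), the Voronoi cell is the standard cube, whose vertices are dual to the octahedral Delaunay cells with $8$ vertices. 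For four lines, the dual Delaunay polytope is a tetrahedron with $4$ vertices. The intermediate cases admit multiple vertex orbits in the Voronoi cell, which explains the alternatives ``$4$ or $6$'' and ``$4$ or $5$''; I would list these orbit types explicitly from Vallentin's diagram.

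For part (iii), I would simply expand (\ref{eq:finitethetasum}) for each choice of dual graph and vertex orbit. Writing $n = (n_1,n_2,n_3)$, each summand contributes an exponential of the form $c_n \cdot \exp(i(n_1 X + n_2 Y + n_3 Z))$, where $c_n = \exp(-\tfrac{1}{2}n^T B_0 n)$ absorbs the constants. Regrouping these exponentials into $\sin$ and $\cos$ terms and then clearing by a suitable real exponential factor, one recovers termwise the equations written in \cite[eqns.~(5),(6)]{Eies08} and \cite[eqn.~(5)]{Eies09}; this also explains the implicit equations in (\ref{eq:scherk}) and (\ref{eq:fourterms}). I expect the main obstacle to be part (ii): carefully matching each dual graph in Figure~\ref{fig:graphs} to its Delaunay polytope orbits in Figure~\ref{fig:vallentin} and verifying the correct vertex counts in the degenerate (non-maximal) Voronoi cells, since there multiple orbits of vertices appear and each gives a different degenerate theta function.
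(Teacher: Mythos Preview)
Your overall strategy matches the paper's: establish the limit via (\ref{eq:summandsabove}), identify $\mathcal{D}_{\mathbf{a},B}$ with the Delaunay vertex set, then enumerate cases and compare with Eiesland. Your part (i) is in fact more careful than the paper, which simply asserts that (\ref{eq:summandsabove}) yields the finite sum without justifying the interchange of limit and summation; your dominated convergence argument fills that gap (modulo a harmless constant in the exponent and a linear term in $\mathbf{x}$ that does not affect summability).

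There is, however, a genuine error in part (ii). The Delaunay polytope dual to a Voronoi vertex is \emph{not} obtained by taking the polar dual of the Voronoi cell. Voronoi--Delaunay duality is a duality of cell complexes, not polarity of convex bodies: the Delaunay cell at $\mathbf{a}$ is the convex hull of those lattice points $n$ achieving equality in (\ref{eq:voronoicell}), and its combinatorial type depends on which vertex $\mathbf{a}$ you pick, not on the global shape of the Voronoi cell. Your sentence ``the Voronoi cell is the standard cube, whose vertices are dual to the octahedral Delaunay cells with $8$ vertices'' exhibits the confusion: an octahedron has $6$ vertices, and in fact for $B=I$ the Delaunay cell at $\mathbf{a}=(\tfrac12,\tfrac12,\tfrac12)$ is the unit \emph{cube} $\{0,1\}^3$, which is where the $8$ comes from. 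Likewise, for four lines the Voronoi cell is the permutohedron and the Delaunay cells are tetrahedra --- not the polar dual of a permutohedron. The paper handles this part not by any duality trick but by writing down, for each of the five Riemann matrices $B$ in Figure~\ref{fig:vallentin}, explicit representatives $\mathbf{a}$ of every vertex orbit and the corresponding sets $\mathcal{D}_{\mathbf{a},B}$; the resulting Delaunay polytopes turn out to be a tetrahedron, an Egyptian pyramid, an octahedron, or a cube, which gives the counts $4,5,6,8$. You should replace the polar-dual step by this direct enumeration.
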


 \begin{proof}
The subdivision of $\mathbb{R}^3$ dual to the Voronoi decomposition
is the Delaunay subdivision; see \cite[Section 5]{BBC}  or \cite[Section 4.2]{Chan}.
Its cells are dual to those of the Voronoi decomposition. In particular, each
vertex ${\bf a}$ of the Voronoi polytope corresponds to a $3$-dimensional 
{\em Delaunay polytope}.  
The vertices of the Delaunay polytope are the elements of the set $\mathcal{D}_{{\bf a},B}$.

For instance, consider the case of four lines, which is
listed last in Figures \ref{fig:nodalquartics}, \ref{fig:graphs} and
 first in Figure \ref{fig:vallentin}. The Voronoi cell is the
{\em permutohedron}, also known as the truncated octahedron.
Each of its $24$ vertices ${\bf a}$ is dual to a tetrahedron
in the Delaunay subdivision. See \cite[Example 5.5]{BBC}
and the left diagram in \cite[Figure 15]{BBC}.
Hence all Delaunay polytopes are tetrahedra, i.e.~$|\mathcal{D}_{{\bf a},B}| = 4$.
This explains the four terms in 
(\ref{eq:fourterms}) or~\cite[eqns.~(6)]{Eies08}.

The four other Delaunay subdivisions are obtained by
moving the matrix $B$ to a lower-dimensional stratum in the
tropical moduli space, proceeding downwards in \cite[Figure 8]{Chan}.
The resulting Delaunay polytopes are obtained by fusing the tetrahedra in
 \cite[Figure 15]{BBC}. Hence all  Delaunay polytopes 
 ${\rm conv}(\mathcal{D}_{{\bf a},B})$
  are naturally triangulated into unit tetrahedra.
 
 We now present a list, up to symmetry, of all vertices ${\bf a}$ 
 of the Voronoi polytopes. Our five special Riemann matrices $B$ here appear in
 the order given in  Figure \ref{fig:vallentin}. In each case, we display   the 
 Delaunay set  $\mathcal{D}_{{\bf a},B}$. This is the support
 of the degenerate theta function  in (\ref{eq:finitethetasum}):
$$
 \begin{matrix}
 \text{\em type of curve}  & \text{\# \em orbit} & {\bf a}^T & \mathcal{D}_{{\bf a},B} \medskip \\
  \text{$4$ lines} & 24 &
\bigl(\frac{3}{4},\frac{1}{2},\frac{1}{4} \bigr) & \{ (000),(100),(110),(111) \} \medskip \\
   \text{conic + $2$ lines} & 8 &
\bigl(\frac{3}{4},\frac{1}{2},\frac{1}{4} \bigr) & \{ (000),(100),(110),(111) \} \smallskip \\
  \text{conic + $2$ lines} & 10 &
\bigl(\frac{5}{8},\frac{1}{4},\frac{5}{8}\bigr) & \{ (000),(001),(100),(101), (111)\} \medskip \\
  \text{$2$ conics} & 8 &
\bigl(\frac{3}{4},\frac{1}{2},\frac{1}{4} \bigr) & \{ (000),(100),(110),(111) \} \smallskip \\
  \text{$2$ conics} & 6 &
\bigl(\frac{1}{2},1,\frac{1}{2}\bigr) & \{ 
(000),(010),(011),(110),(111),(121) \} \medskip \\
 \text{cubic+line} & 12 & 
\bigl(\frac{2}{3},\frac{1}{3},\frac{1}{2} \bigr) & 
\{ (000),(001),(100),(101),(110),(111)\}
\medskip \\
 \text{rational quartic} & 8 & 
 \bigl(\frac{1}{2},\frac{1}{2},\frac{1}{2} \bigr) & 
\hspace{-10mm} \{ (000),(001),(010),(011),
\medskip \\
 &  & 
 & \quad
\hspace{9mm} (100),(101),(110), (111)
 \} \medskip \\
\end{matrix}
 $$
The column {\em \# orbit} gives the cardinality of
the symmetry class of the vertex ${\bf a}$ of the Voronoi cell.
We see that the Delaunay polytope
$\,{\rm conv}(\mathcal{D}_{{\bf a},B})\,$  is either a
 tetrahedron, an Egyptian pyramid, an octahedron, or a cube.
 Given the type in Figure~\ref{fig:nodalquartics}, 
for a suitable choice of Riemann matrix $B$ 
  and Voronoi vertex ${\bf a}$, we recover precisely the
 tetrahedron in \cite[eqn.~(6)]{Eies08},
 the octahedron in \cite[eqn.~(6)]{Eies08},
 and the cube in  \cite[eqn.~(5)]{Eies09}.
 Eiesland's coefficients $A,B,C,\ldots$ for these theta surfaces
 are determined by the fixed symmetric matrix $B_0$
 that define the degeneration~(\ref{eq:limitfamily}).
In conclusion, equation (\ref{eq:summandsabove})  implies that 
$\,\theta \bigl( \mathbf{x}-i\cdot tB\mathbf{a},B_t \bigr) \,$
converges to the function given by the 
finite sum in (\ref{eq:finitethetasum}),
with $\mathcal{D}_{{\bf a},B}$ as derived above.
 \end{proof}

In the table above, the same four-element set $\mathcal{D}_{{\bf a},B}$
occurs for 4 lines, for conic + 2 lines and for 2 conics. This is explained by
Remark \ref{rmk:onsameconic} because all three choices
are possible for the basis of a fixed pencil of conics.
Our running example belongs to this tetrahedron case.

\begin{example}[Scherk's minimal surface] \label{ex:vierzwei}
We here use theta functions to recover the surface in Figure~\ref{polytope}.
The rational nodal quartic (\ref{eq:quartic1}) consists of a smooth conic and two lines. 
The second row in Figure \ref{fig:vallentin} shows that the corresponding tropical period matrix is
	$ \,B= \begin{tiny} \begin{pmatrix} \,\,2 & \! -1 & \,0 \,\\  -1 & 3 & \!\! -1 \\ \,\, 0 \,& \!\! -1 & 2 \,\end{pmatrix}\end{tiny} $.
	
We consider the degenerate theta function of Theorem \ref{thm:degtheta} with the data
$$ \begin{small}
	 B_0 \,=\, i\cdot \begin{pmatrix} -1 & 0 & 0 \\ 0 & 1 & 0 \\ 0 & 0 &-1   \end{pmatrix},
	  \quad \mathbf{a} \,=\, \begin{pmatrix} 
	 3/4 \\ 1/2 \\ 1/4 \end{pmatrix}, \quad \mathbf{x} 
	= \frac{1}{2\pi}\begin{pmatrix} 2X \\ -X+Y-iZ \\ -2Y \end{pmatrix} .
	\end{small} $$
	Then	 $\,\mathcal{D}_{\mathbf{a},B} \,=\, \{ (0,0,0),\, (1,0,0),\, (1,1,0),\, (1,1,1) \}$,
	and the four-term sum in (\ref{eq:finitethetasum}) equals
\begin{equation}
\label{eq:scherkExp}
	\begin{matrix} & 1 - {\rm exp}(2iX)+ {\rm exp}(iX+iY+Z) - {\rm exp}(iX-iY+Z) \\ = &
- 2i \cdot {\rm exp}(2i X) \cdot \bigl( \,
\sin(X) \,-\, \sin(Y)\exp(Z)\, \bigr).
\end{matrix}
\end{equation}
 In the parentheses on the right we see
expression (\ref{eq:scherk}) from the beginning of this paper.
\end{example}

 \begin{example}[Irreducible quartic]
This class of theta surfaces was studied  by Eiesland in \cite{Eies09}.
His ``unicursal quartic'' is the rational quartic with three nodes
on the left in Figure \ref{fig:nodalquartics}. Here,
	 $B$ is the identity matrix and the Voronoi cell is the cube
	 with vertices $\left(\pm\frac{1}{2},\pm\frac{1}{2},\pm\frac{1}{2}\right)$.
	  Fixing the vertex $\mathbf{a}^T = \left(\frac{1}{2},\frac{1}{2},\frac{1}{2}\right)$,
	  the Delaunay polytope is the cube with vertex set $\{0,1\}^3$.
	  
We choose an arbitrary real symmetric $3 \times 3$ matrix $B_0$, and we abbreviate
\begin{equation}
\label{eq:Ann}	\begin{matrix} A_{n} \,\,: =\,\, \mathbf{e} \bigl(-\frac{1}{2}n^t B_0 n \bigr)
	\qquad {\rm for} \,\, \,n \in \{0,1\}^3.  \end{matrix}
\end{equation}	
Note that $A_{(000)} = 1$.
Writing $\mathbf{x}= -i \cdot(X,Y,Z)$,  the degenerate theta function in 
(\ref{eq:finitethetasum}) equals
$$ \begin{matrix}
	\lim_{t\to +\infty}\theta\left( \mathbf{x}-i\cdot tB\mathbf{a},B_t \right) 
	 \,\,=\,\,\, 1
	\,+\,A_{(100)}\,\mathbf{e}( X)
	\,+\,A_{(010)}\,\mathbf{e}( Y) 
	\,+\,A_{(001)}\,\mathbf{e}( Z) \qquad \qquad \\ \qquad \qquad
	+\,A_{(011)}\,\mathbf{e}( Y{+}Z) 
	+A_{(101)}\,\mathbf{e}( X{+}Z)
	+A_{(110)}\,\mathbf{e}( X{+}Y)
	+A_{(111)}\,\mathbf{e}( X{+}Y{+}Z).
\end{matrix}
$$
This is precisely the theta surface derived
in the theorem in \cite[page 176]{Eies09}. Here we have
	\begin{equation*}
	A_{(100)}A_{(010)}A_{(001)}A_{(111)} \,\,=\,\, A_{(000)} A_{(011)}A_{(101)}A_{(110)}.
	\end{equation*}
	This follows directly from (\ref{eq:Ann}), and it matches 
	Eiesland's identity in	\cite[eqn.~(6)]{Eies09}.
\end{example}

\section{Algebraic Theta Surfaces}
\label{sec6}

 Theta surfaces are usually transcendental. But, in some special cases, it can happen that~they are algebraic.
 These cases were classified by Eiesland \cite{Eies08}. Our aim is to present his result.
 
 \begin{example} \label{ex:TSdegree4}
 We begin by showing that the following quartic surface is a theta surface:
 \begin{equation} \label{eq:TSdegree4}
 Y^4\,-\,4XY^2\,-\,4X^2\,+\,8Z\,\,=\,\,0.
 \end{equation}
 The underlying quartic curve consists of a cuspidal cubic together with its cuspidal tangent:
 $$q\,=\,(y^2-x^3)y.$$
 We evaluate the abelian integrals over the line $y=0$ and the cubic, 
 parametrized respectively  $x=s$ and $x=t$. It turns out that all
 required  antiderivatives are algebraic functions, namely
$$ \begin{matrix}
X & = & \frac{1}{2}\int\frac{s}{s^3}d s \,-\,\int\frac{t}{t^3}d t & = &  - \frac{1}{2s} +\frac{1}{t} ,\medskip \\
Y & = & \frac{1}{2}\int\frac{s^{3/2}}{s^3}d s \,-\,\int\frac{0}{t^3}d t & = &  - \frac{1}{\sqrt{s}} \, ,  \medskip \\
Z & = & \frac{1}{2}\int\frac{1}{s^3}ds \,-\,\int\frac{1}{t^3}d t  & = &  -\frac{1}{4s^2}+\frac{1}{2t^2}.
\end{matrix} $$
This is a parametrization of the rational surface (\ref{eq:TSdegree4}), which is
singular along a line at~infinity.
\end{example}

\begin{example}\label{Quadric}
Another one is the quadric whose equation $3XY-Z=0$. It arises from the quartic
which is the union of the four concurrent lines $x=-y$, $x=y$, $2x=-y$, $2x=y$. 
%
\end{example}

Eiesland \cite{Eies08} identified all scenarios where the abelian integrals are essentially rational.

\begin{theorem}[Eiesland] \label{thm:eiesland}
Every algebraic theta surface is rational and has degree $2, 3, 4, 5$~or~$6$.
The underlying quartic has rational components and none of its singularities are nodes.
\end{theorem}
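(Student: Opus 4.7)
The plan is to pin down when the abelian integrals $\Omega_{ij}$ in (\ref{eq:omega}) have algebraic antiderivatives, so that the parametrization (\ref{eq:stmap}) produced by Algorithm \ref{alg:thetaSurface2} becomes an algebraic parametrization of $\mathcal{S}$. As in Remark \ref{rmk:implicitization}, the holomorphic differentials $\omega_1,\omega_2,\omega_3$ of (\ref{eq:omegabasis1}) restrict on each irreducible component $\Gamma$ of $\mathcal{Q}$ to meromorphic differentials on the normalization $\widetilde{\Gamma}$, and $\mathcal{S}$ is algebraic if and only if all three restrictions admit algebraic antiderivatives on every $\widetilde{\Gamma}$. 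I would first rule out components of positive genus: if $\widetilde{\Gamma}$ had geometric genus $g'\geq 1$, then some $\mathbb{C}$-linear combination of the $\omega_i|_{\widetilde{\Gamma}}$ would be a nonzero holomorphic differential on $\widetilde{\Gamma}$, and its indefinite integral would be a classical abelian integral of the first kind, hence transcendental by Liouville's theorem on elementary functions. This transcendence propagates into the parametrization of $\mathcal{S}$ and hence into its implicit equation, so every component of $\mathcal{Q}$ must be rational.

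Next I would exclude nodes. At a node $p\in\mathcal{Q}$, the dualizing sheaf admits sections with simple poles of opposite, nonzero residues at the two preimages of $p$ in $\widetilde{\Gamma}_1\sqcup\widetilde{\Gamma}_2$; since the arithmetic-minus-geometric genus drop forces nontrivial residue behavior, at least one $\omega_i$ restricts with nonzero residue at the preimage of a given node. Its antiderivative on the rational normalization contains a genuine $\log(t-a)$ term, exactly as in Example \ref{eq:amoebaofline} and the exponential-sum equations of Theorem \ref{thm:degtheta}, and leads to transcendental implicit equations of the shape (\ref{eq:scherk}) or (\ref{eq:fourterms}). A $\mathbb{C}$-linear-independence argument on the logarithmic residues at the nodes shows that these $\log$-terms cannot be annihilated by any affine change of coordinates on $\mathbb{C}^3$, so $\mathcal{S}$ is necessarily transcendental as soon as any singularity of $\mathcal{Q}$ is a node.

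Once both conditions hold—rational components and no nodes—each $\omega_i|_{\widetilde{\Gamma}}$ is a residue-free meromorphic differential on $\mathbb{P}^1$, its antiderivative is a rational function of the branch parameter, and therefore $\mathcal{S}$ is the image of a rational map $\mathbb{C}^2 \dashrightarrow \mathbb{C}^3$ with rational coordinate functions; in particular $\mathcal{S}$ is an irreducible rational algebraic surface. For the degree bound, I would carry out a finite case analysis over the admissible decomposition patterns of $\mathcal{Q}$, namely the five types of Figure \ref{fig:nodalquartics} with every node replaced by a cusp or higher contact singularity so that the residue conditions are enforced. In each case I would compute the rational antiderivatives explicitly and eliminate $s,t$ from the three coordinates, as already illustrated for degrees $2$ and $4$ in Examples \ref{Quadric} and \ref{ex:TSdegree4}. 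The main obstacle I anticipate is extracting a clean a priori upper bound of $6$ rather than merely verifying it case by case; I would expect this ultimately to reflect the self-intersection $\Theta^3 = 3! = 6$ of the theta divisor on a smooth genus-three Jacobian, persisting under the toroidal/tropical limits studied in Section \ref{sec5}.
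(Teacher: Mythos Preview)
The paper does not actually prove Theorem~\ref{thm:eiesland}. It is stated as a result of Eiesland, with explicit attribution to \cite{Eies08}, and the surrounding Section~\ref{sec6} only presents worked examples (degrees $3,4,5,6$) illustrating the classification. The one remark bearing on the argument is that the degeneration analysis in Example~\ref{ex:TSdegree3} ``plays an important role in Eiesland's proof,'' but the proof itself is never reproduced. So there is nothing in the paper to compare your proposal against; you are sketching a proof of a theorem the authors chose to quote rather than prove.

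On its own merits, your outline identifies the right mechanism: the integrals in \eqref{eq:omega} are algebraic precisely when each $\omega_i$ restricts to a residue-free meromorphic differential on a rational normalization, which forces rational components and forbids nodes. Two places would need tightening. First, the step ``transcendence of the parametrization implies transcendence of the implicit equation'' is not free: a transcendentally parametrized surface can in principle be algebraic, so you must argue that the generating \emph{curves} $\mathcal{C}_j$ themselves would have to be algebraic (e.g.\ because an algebraic $\mathcal{S}$ sliced by the translates of $\mathcal{C}_2$ gives algebraic copies of $\mathcal{C}_1$), and then deduce that the single-variable integrals are algebraic functions of the local parameter. Second, your claim that some $\mathbb{C}$-linear combination of the $\omega_i|_{\widetilde\Gamma}$ is a nonzero holomorphic differential whenever $g(\widetilde\Gamma)\ge 1$ needs a short justification via the structure of the dualizing sheaf on a reduced plane curve. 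Your candid flagging of the degree bound as the hard part is appropriate; Eiesland obtains it by the explicit case analysis you describe, and the $\Theta^3=6$ heuristic, while suggestive, does not directly control the affine degree of a degenerate theta surface.
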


In Example \ref{ex:TSdegree4} we already saw our first algebraic theta surface. In the next 
examples we present further surfaces, of degrees $5, 4, 6, 3$, in this order. In each case, 
the quartic curve satisfies the condition in the second sentence of Theorem~\ref{thm:eiesland}.
The section will conclude with a discussion on degenerations of abelian functions. 
This will establish the link  to Section~\ref{sec5}.

\begin{figure}[!htb]
      \begin{minipage}{0.50\textwidth}
     \centering
     \includegraphics[width=0.95\linewidth]{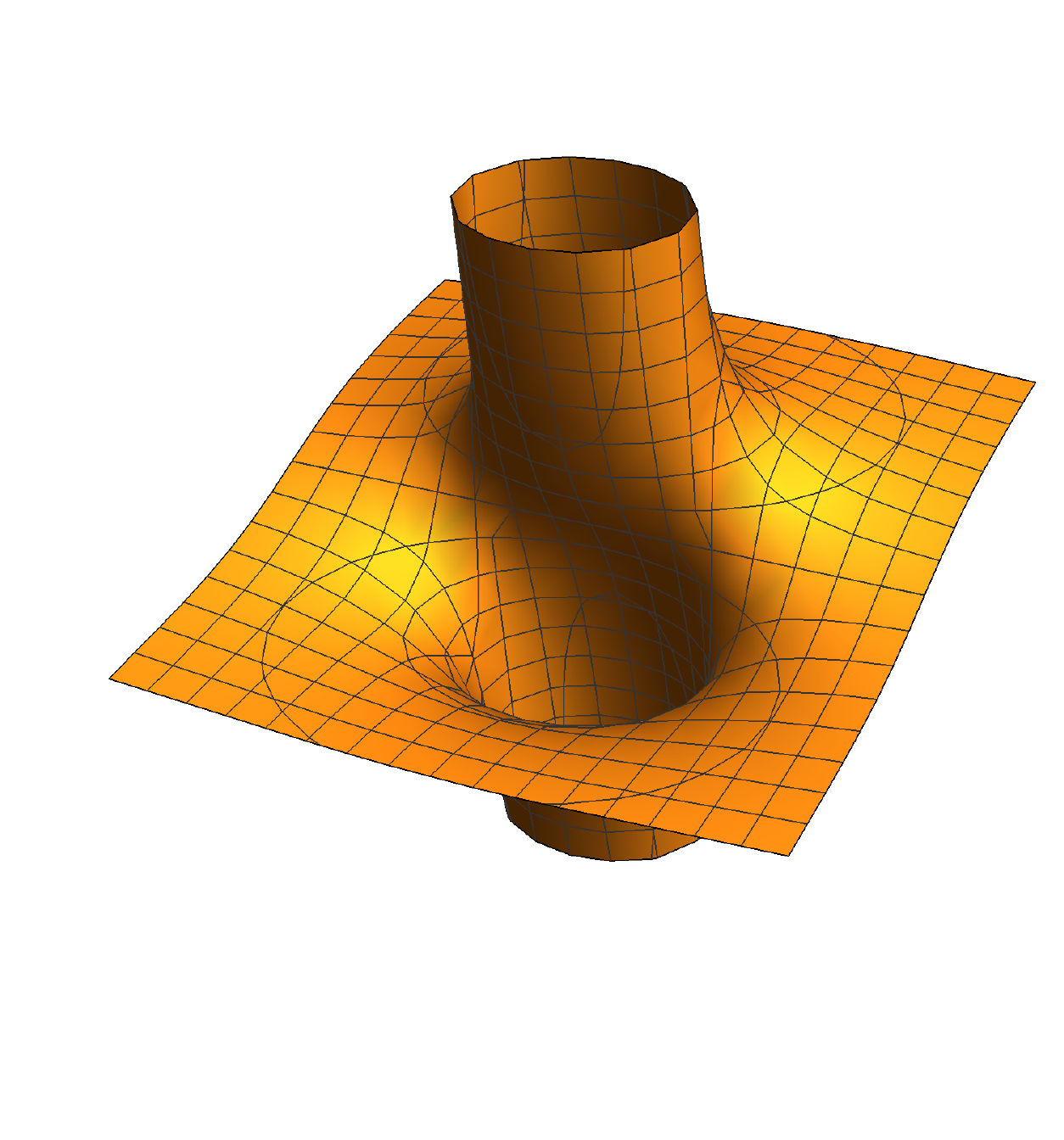}\\
   \end{minipage} \hfill
      \begin{minipage}{0.50\textwidth}
     \centering
     \includegraphics[width=.6\linewidth]{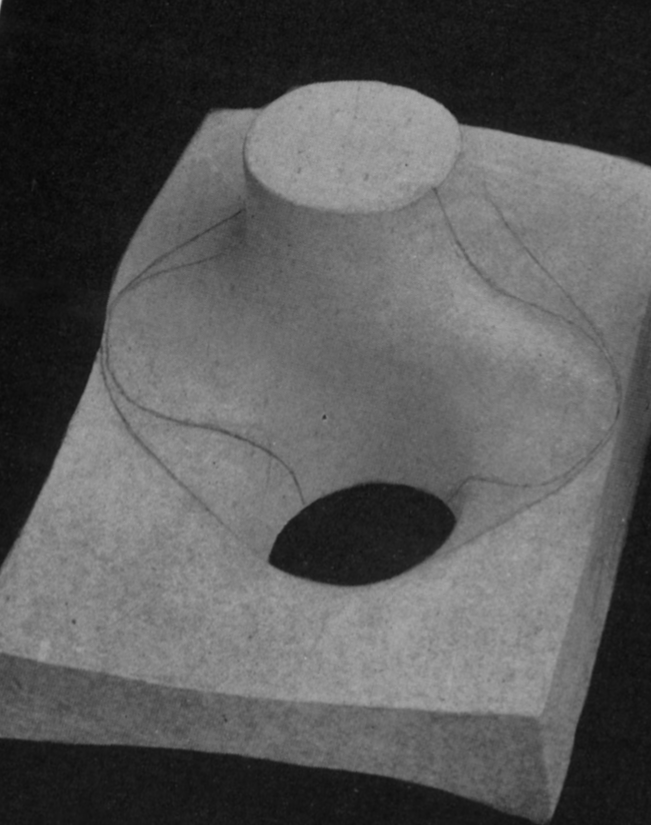}\\
   \end{minipage} 
   \vspace{-0.35in}
   \caption{The cardioid surface in Example \ref{ex:cardiodsurface}.
      On the right is Eiesland's plaster model~\cite{Eies08}.
   \label{fig:CardioidSurface}}
 \end{figure}
 
\begin{example}\label{ex:curvelittle}
The following toric curve a is rational quartic with a triple point:
$$ Q \,\,=\,\, x^4 - yz^3 .$$
Following \cite[Example 4.3]{Little83},
the abelian integrals on this curve evaluate to
$$ \begin{matrix}
X &= &\int sds +\int{t}d t & = & \frac{1}{2}(s^2+t^2), \smallskip \\
Y & = & \int s^4d s +\int t^4d t & = & \frac{1}{5}(s^5+t^5),\smallskip \\
Z & = & \int d s +\int d t & = & s+t.
\end{matrix} $$
Elimination of the parameters $s$ and $t$ reveals the equation for this quintic theta surface:
\begin{equation} \label{eq:TSdegree5}
Z^5-20X^2Z+20Y\,\,= \,\,0.
\end{equation}
Little \cite{Little83} generalizes this and other theta surfaces to higher dimensions.
He presents a geometric derivation of theta divisors in $\CC^g$ from 
(degenerate) canonical curves of genus $g$.
\end{example}

Our next example shows how transcendental theta surfaces can degenerate to algebraic surfaces.
This kind of analysis plays an important role in Eiesland's proof of Theorem \ref{thm:eiesland}.

 \begin{example} \label{ex:TSdegree3}
 We consider an irreducible rational quartic with two singular points, namely 
  one ordinary cusp and one tacnode. The following curve
  has these properties for general $\lambda$:
  $$q\,\,=\,\,x^3y+\lambda x^4+(1-\lambda) x^2y -y^2 \qquad {\rm with} \quad
  q_y\,\,=\,\,x^3 + (1-\lambda)x^2 -2y . $$
Following Eiesland's derivation in \cite[Section III]{Eies08}, we choose
the rational parametrization
$$ \begin{matrix}
x\,\,=\,\,\frac{w^2\,+\,(\lambda+1)w}{w\,+\,1}
\quad {\rm and} \quad 
y\,\,= \,\,\frac{(w^2\,+\,(\lambda+1)w)^2}{w\,+\,1}
\end{matrix}
$$
The differential form $dx$ can be written in terms of the curve parameter $w$ as follows:
$$ \begin{matrix} dx \,\, = \,\, \bigl( 1 + \frac{\lambda}{(w+1)^2} \bigr) dw .
\end{matrix} $$
We substitute these expressions into  \eqref{eq:omegabasis1}, 
and we find the three antiderivatives
$$ \begin{matrix} 
\frac{\log(w+\lambda+1)}{\lambda+1} - \frac{\log(w)}{\lambda + 1} \,  , \quad  -w \, , \quad
\frac{(\lambda - 1)\log(w+\lambda+1 )}{(\lambda+1)^3}-\frac{(\lambda-1)\log(w )}{(\lambda+1)^3} -
 \frac{(\lambda - 1)w - \lambda-1}{(\lambda+1)^2w^2+(\lambda+1)^3w}. \end{matrix}
 $$
 The two generating curves are found by setting  $w=s$ and $w=t$. Hence our theta surface is
$$ \begin{matrix}
X & = & \frac{\log(s+\lambda+1)}{\lambda+1} - \frac{\log (s)}{\lambda + 1}
\,\,+\,\,\frac{\log(t+\lambda+1)}{\lambda+1} - \frac{\log(t)}{\lambda + 1} , \smallskip \\
 Y & = &  -\,s\,- t,\qquad \qquad \smallskip \\
Z & = & \frac{(\lambda - 1)\log(s+\lambda+1 )}{(\lambda+1)^3}-\frac{(\lambda-1)\log(s )}{(\lambda+1)^3} - 
\frac{(\lambda - 1)s - \lambda-1}{(\lambda+1)^2s^2+(\lambda+1)^3s} \smallskip \\   & 
&\quad \quad +\frac{(\lambda - 1)\log(t+\lambda+1 )}{(\lambda+1)^3}-\frac{(\lambda-1)\log(t )}{(\lambda+1)^3} - \frac{(\lambda - 1)t - \lambda-1}{(\lambda+1)^2t^2+(\lambda+1)^3t}.
\end{matrix} $$
From this parametrization we infer that the theta surface is transcendental for $\lambda \not= -1$. 

Now let $\lambda=-1$. Then $q=x^3y-x^4+2yx^2-y^2$
and the tacnode is now a {\em node-cusp}.
This is a singular point obtained by merging a node and a cusp.
Using  the calculus identity $ \,
{\rm lim}_{\lambda \rightarrow -1}  \bigl\{\frac{\log(s+\lambda+1)}{\lambda+1} 
- \frac{\log(s)}{\lambda + 1} \bigr\} = 
\frac{d}{ds} {\rm log}(s) =
\frac{1}{s}$, the parametrization of our surface becomes
$$ 
\begin{matrix} X\,=\,\frac{1}{s }+\frac{1}{t },\quad \quad Y\,=\,-s-t, \quad \quad
Z\,=\,\frac{3s+2}{6s^3}+\frac{3t+2}{6t^3}. \end{matrix}
$$
Eliminating $s$ and $t$, we obtain the implicit equation.
$$ 2X^3Y+3X^2Y+6X^2-6YZ+6X\,\,=\,\,0. $$
Hence, the theta surface is a rational quartic for $\lambda=-1$,
and it is transcendental for $\lambda \not=-1$.
\end{example}

The maximum degree of any algebraic theta surface is six. The next example  attains~this.

\begin{example} \label{ex:ats6}
 Following Eiesland \cite[p.~381--383 and VI on p.~386]{Eies08},
we consider the quartic
$$ q \,\,= \,\, (y-x^2)^2 \,+\, 2xy(y-x^2) \,+\, y^3 . $$
The unique singular point, at the origin, is a  {\em tacnode cusp}. The theta surface is given by
$$ \begin{matrix}
X & = &  \int \! \frac{s+1}{s^4} ds \,+\, \int \! \frac{t+1}{t^4} dt & = &
-\frac{1}{2s^2} - \frac{1}{3s^3} \,- \,\frac{1}{2t^2} - \frac{1}{3t^3} ,
\smallskip \\
Y & = &  \int \! \frac{1}{s^2}ds \, +\,\int \! \frac{1}{t^2}dt , & = &
-\frac{1}{s}\,-\,\frac{1}{t},
\smallskip \\
Z & = &  \int \! \frac{2s+1}{s^6} ds \,+\,\int \! \frac{2t+1}{t^6} & = &
-\frac{1}{5s^5} - \frac{1}{2s^4} \,-\,\frac{1}{5t^5} - \frac{1}{2t^4}.
\\
\end{matrix}
$$
The implicit equation is found to be
$$
4 Y^6-24 Y^5-60 X Y^3+45 Y^4+180 X Y^2-180 X^2+180 Y Z-180 Z \,\,\,=\,\,\, 0.
$$
This sextic looks different from that in \cite[VI on p.~386]{Eies08}
because of a coordinate change.
\end{example}

We conclude our  panorama of algebraic theta surfaces with two classical cubic surfaces. 

\begin{example}[Cardioid Surface] \label{ex:cardiodsurface}
We first consider the cardioid $q=(x^2+y^2-2x)^2-4(x^2+y^2)$. Note that $q_y=4x^2y + 4y^3 -8xy -8y$. We choose a rational parametrization as follows:
$$ \begin{matrix}
x\,\,=\,\,\frac{4(1-w^2)}{(w^2+1)^2}\,,
\quad \hbox{hence} \quad
dx=\frac{8w(w^2 - 3)}{(w^2+1)^3}dw \,,
\quad \hbox{and} \quad
y\,\,= \,\,\frac{8w}{(w^2+1)^2}.
\end{matrix}
$$
We substitute this into the differential forms in (\ref{eq:omegabasis1}), and 
we compute the three antiderivatives:
$$ 
\begin{matrix}
 \int \frac{w^2-1}{2(1+w^2)^2}dw =  -\frac{w}{2(w^2 + 1)} \, , \quad
\int \frac{-w}{(1+w^2)^2}dw =  \frac{1}{2(w^2 + 1)} \, ,  \quad
  \int \frac{-1}{8}dw =  -\frac{w}{8}.
\end{matrix}
$$
Similar to the computations in the previous examples, we obtain the theta surface as follows:
\begin{equation*}
\label{eq:cardiodsurface}
\begin{matrix} X\,=\,-\frac{s}{2(s^2 + 1)}-\frac{t}{2(t^2 + 1)},\quad \quad Y\,=\,\frac{1}{2(s^2 + 1)}+\frac{1}{2(t^2 + 1)}, \quad \quad Z\,=\,-\frac{s}{8}-\frac{t}{8},  \medskip \\ 8X^2Z + 8Y^2Z - 4YZ - X \,\,=\,\,0.
\end{matrix}
\end{equation*}
Two pictures of the cardioid surface, from  the 19th and 21st century,
are shown in Figure~\ref{fig:CardioidSurface}.
\end{example}

\begin{figure}[h!]
\vspace{-0.08in}
   \begin{minipage}{0.50\textwidth}
     \centering
     \includegraphics[width=.9\linewidth]{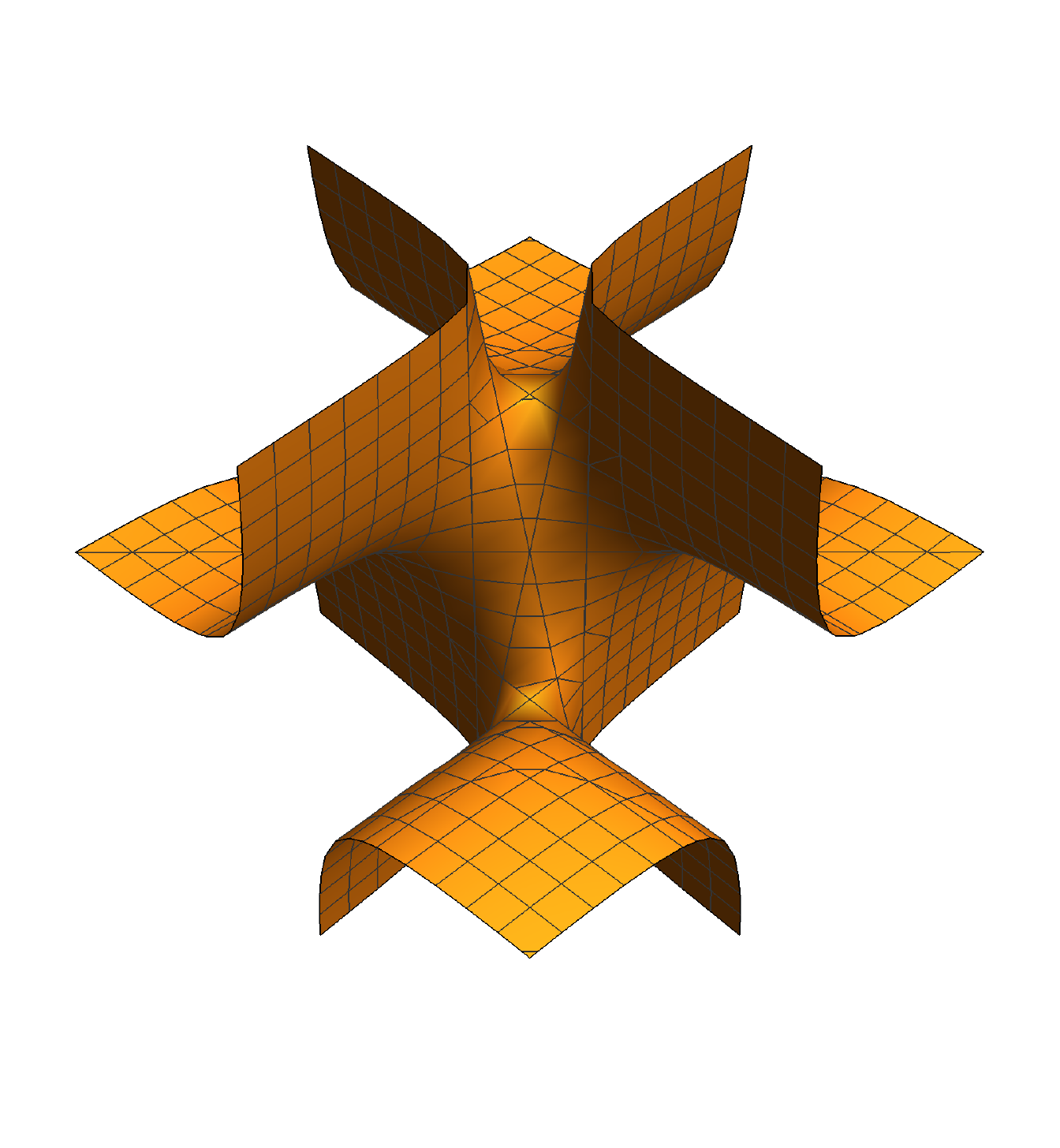}\\
   \end{minipage}\hfill
   \begin{minipage}{0.50\textwidth}
     \centering
     \includegraphics[width=.6\linewidth]{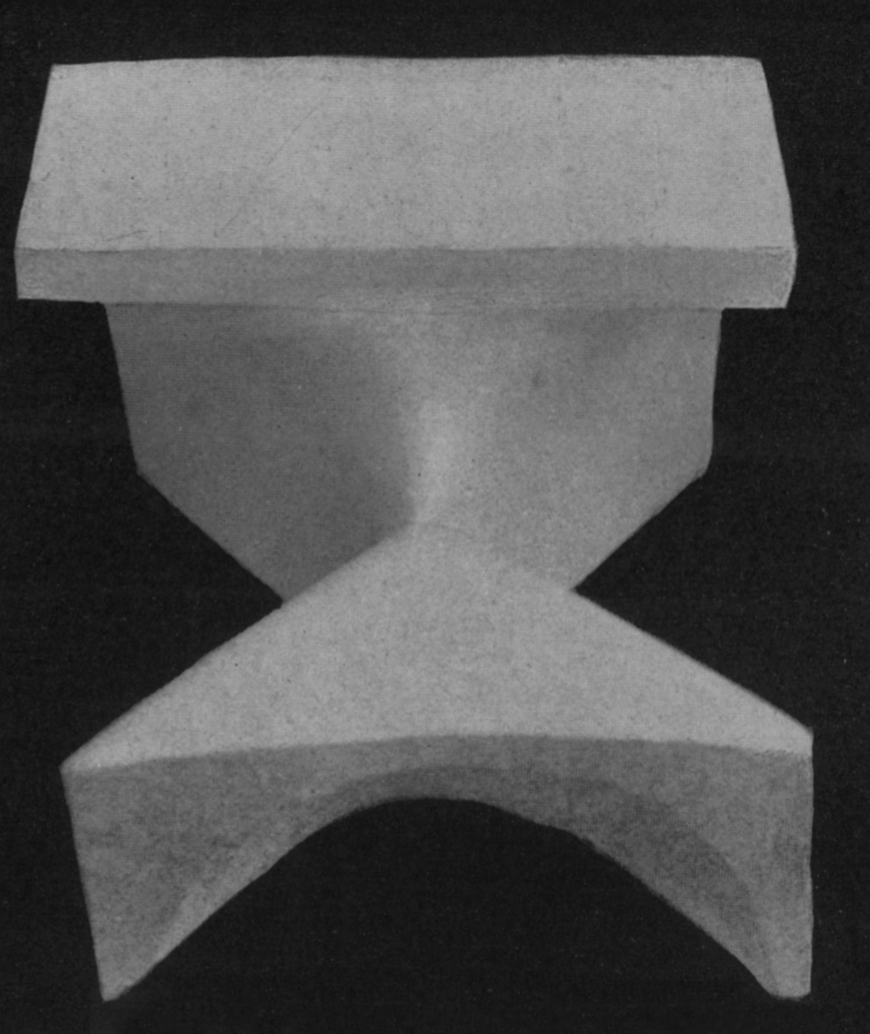}\\
   \end{minipage}
   \vspace{-0.3in}
   \caption{The Deltoid Surface.    On the right ist   Eiesland's plaster model~\cite{Eies08}.
   \label{fig:DeltoidSurface}}
\vspace{-0.04in}   
\end{figure}

\begin{example}[Deltoid Surface] \label{ex:deltoidsurface}
We consider the deltoid curve with parametrization
$$ \begin{matrix}
x\,\,=\,\,\frac{4}{(w+1)^2},
\quad {\rm and} \quad 
y\,\,= \,\,\frac{4}{w^2+6w+9}.
\end{matrix}
$$
This defines the quartic $\,q=y^2+x^2-2xy+x^2y^2-2x^2y-2xy^2$.
The abelian integrals give
\begin{equation*}
\begin{matrix}
&X & = & \int \frac{1}{(s+1)^2}ds+\int \frac{1}{(t+1)^2}dt& = & -\frac{1}{(s + 1)}-\frac{1}{(t + 1)}, 
\smallskip \\
& Y & = & \int \frac{1}{(3+s)^2}ds+\int \frac{1}{(3+t)^2}dt  & = &- \frac{1}{s+3}-\frac{1}{(t+3)}, 
\smallskip \\
&Z & = & \int \frac{1}{4}ds+\int \frac{1}{4}dt  &= &\frac{s}{4}+\frac{t}{4}.
\end{matrix}
\end{equation*}
Elimination yields the following cubic equation. This surface is exhibited in
Figure~\ref{fig:DeltoidSurface}:
\begin{equation*}
4XYZ + 4XY + 2XZ - 2YZ + 3X - Y\,\,=\,\,0.
\end{equation*}
This equation can be transformed into the cubic $\,XYZ = a_1 X+a_2 Y+a_3 Z\,$ given by Chern \cite[p.~2]{Chern} 
via a linear change of coordinates, similar to that in  \cite[page 376, equation~(11)]{Eies08}.
We saw the algebraic theta surfaces of lowest degree three 
in Figures \ref{fig:CardioidSurface} and \ref{fig:DeltoidSurface},
in views that take us back to the 19th century.
 We will learn more about the plaster models
in Section~\ref{sec7}. 
\end{example}

In Section \ref{sec5} we derived special transcendental theta surfaces
by degenerations from Riemann's theta function. An example
was the formula for Scherk's surface in (\ref{eq:scherkExp}).
This raises the question whether our algebraic theta surfaces
can be obtained in a similar manner.
While the answer is affirmative, the details are complicated
and we can only offer a glimpse. The role of the theta function is now played
by a variant called the {\em sigma function} \cite[\S 5]{naka}.

We illustrate this for the singular quartic of Example \ref{ex:curvelittle},  given by the affine equation
\begin{equation}
\label{eq:xzsingquar}
 x^4 - z^3 \,\,=\,\, 0. \qquad  \qquad
 \end{equation}
This belongs to the family of {\em $(3,4)$-curves} \cite[eqn (5.1)]{BEL}.
These plane curves are defined~by
\begin{equation}
\label{eq:34curve}
\qquad
x^4 - z^3 \,+\, \lambda_1 x^2z + \lambda_2 x^2 
+ \lambda_3 xz +\lambda_4 x + \lambda_5 z + \lambda_6 \,\,=\,\, 0 .  
\end{equation}
The binomial (\ref{eq:xzsingquar})
is the most degenerate instance where all six coefficients $\lambda_i$ are zero. 
Such curves, and the more general $(n,s)$ curves, were introduced in the
theory of integrable systems by Buchstaber, Enolski and Leykin \cite{BEL}
and studied further by Nakayashiki \cite{naka}. They 
considered the sigma function associated to (\ref{eq:34curve}).
This is an
abelian function which generalizes Klein's classical sigma function for hyperelliptic curves. 
The sigma function for a $(3,4)$-curve is a multigraded power series in $X,Y,Z$
whose coefficients are polynomials in $\lambda_1,\lambda_2,\ldots,\lambda_6$.
By \cite[Example 4.5]{BEL},
the term of lowest degree in the sigma function equals
$$\sigma_{3,4} \,\,\,=\,\,\,\,Z^5\, -\, 5X^2Z \,+\, 4 Y . $$
This is precisely the quintic in (\ref{eq:TSdegree5}),
after the coordinate scaling $(X,Y,Z)  \mapsto (2X,5Y,Z)$.
Thus, our algebraic theta surface arises from the sigma function 
by setting $\lambda_1 = \cdots = \lambda_6 = 0$.

Similarly, the theta surface in
Example \ref{ex:ats6} is closely related to the sextic $\sigma_{2,7}$ in
 \cite[Example 4.5]{BEL}.
 The polynomials $\sigma_{n,s}$ are known as {\em Schur-Weierstrass polynomials}.
 These play a fundamental role for rational analogs of abelian functions, and hence in the
 design of special solutions to the KP equation.  For details we refer to
 \cite{BEL,naka} and the references therein. Even the 
 genus $3$ case offers  opportunities for further research. It would be interesting to
 revisit this topic from the perspectives of
    theta surfaces and tropical geometry, as in Section~\ref{sec5}.

\section{A Numerical Approach for Smooth Quartics}
\label{sec4}

In this section we assume that the given quartic curve $\mathcal{Q}$  is nonsingular  in $\PP^2$. 
Then $\mathcal{Q}$ is a compact Riemann surface of genus $3$.
Riemann's Theorem \ref{thm:riemann} shows that its theta divisor coincides
with its theta surface, up to an affine transformation. 
We here validate that result computationally
using current tools from numerical algebraic geometry.
 We sample points on
  the theta surface using Algorithm~\ref{alg:thetaSurface1} below, and we then check that Riemann's theta function vanishes at these points using the Julia package  \texttt{Theta.jl} 
  by Agostini and Chua~\cite{Julia}.

\medskip

 \begin{algorithm}[H]\label{alg:thetaSurface1}
    \KwIn{ The inhomogeneous equation $q(x,y)$ of a smooth plane quartic}
    \KwOut{A point on the corresponding theta surface in $\RR^3$ or $\CC^3$}
         \KwSty{Step 1:} Specify two points $p_1$ and $p_2$ on the quartic. \\
          \KwSty{Step 2:} Take two other points $p_1'$ and $p_2'$ nearby $p_1$ and $p_2$ respectively. \\
    \KwSty{Step 3:} Compute the following triples of integrals numerically:
     \begin{equation*} \begin{matrix}
c_1 & = &   (\int_{p_1}^{p_1'}\omega_1 , \int_{p_1}^{p_1'}\omega_2 , \int_{p_1}^{p_1'}\omega_3),  &
c_2 & = &
      (\int_{p_2}^{p_2'}\omega_1 , \int_{p_2}^{p_2'}\omega_2 , \int_{p_2}^{p_2'}\omega_3).
     \end{matrix}
     \end{equation*}
  \\
    \KwSty{Step 4:} Output the sum $c_1+c_2$.  
    \caption{Sampling from a theta surface given its plane quartic }
\end{algorithm} 

\medskip

This algorithm is similar to Algorithm \ref{alg:thetaSurface2}.
However, the difference is that computation is now done by numerical evaluation.
Indeed, when the polynomial $q(x,y)$ defines a smooth quartic, it is impractical to work with
 an algebraic formula for $y$ in terms of $x$, so we employ numerical methods even for  
 Steps 1 and 2 above.   Of course, when such an expression is available, it can be used 
 to strengthen the numerical computations, as we will see later.

The central point of Algorithm \ref{alg:thetaSurface1} is computing the abelian integrals in Step 3.
Such integrals appear throughout mathematics, from algebraic geometry to number theory and integrable systems, and there is extensive work in evaluating them numerically. Notable 
implementations are the library \texttt{abelfunctions} in SageMath~\cite{SwiDec}, 
the package \texttt{algcurves} in Maple~\cite{DecHoe}, and the MATLAB code presented in \cite{MATLAB}. 
The software we used for our experiments is the package \verb|RiemannSurfaces| in SageMath
due to Bruin, Sijsling and Zotine~\cite{BruSijZot}.

The underlying algorithm views a plane algebraic curve $\mathcal{Q}$ as a ramified 
cover $\mathcal{Q}\to\mathbb{P}^1$~of the Riemann sphere $\mathbb{P}^1$ via the projection $(x,y)\mapsto x$. The package lifts paths from $\mathbb{P}^1$ to paths on the Riemann surface $\mathcal{Q}$ and integrates the abelian differentials in \eqref{eq:omegabasis1} along these paths via certified homotopy continuation. 
In order to carry this out, it is essential to avoid the ramification points of the projection to $\mathbb{P}^1$.
This is done by computing the Voronoi decomposition of the Riemann sphere
$\mathbb{P}^1$ given by the branch points 
of $\mathcal{Q}\to \mathbb{P}^1$. The integration paths are obtained from edges of the
Voronoi cells. Avoidance of the ramification points is also 
a feature in the other packages such as \texttt{algcurves} 
and \texttt{abelfunctions}.

It is important to note that avoiding the ramification points conflicts with our
desire to create real theta surfaces and to
work with Riemann matrices and theta equations over~$\RR$.
The cycle basis that is desirable for revealing the real structure, as in \cite{Sil},
forces us to compute integrals near  ramification points. 
We had to tweak the method in  \cite{BruSijZot} to make this~work.

\smallskip

In what follows we present a case study
that illustrates Algorithm~\ref{alg:thetaSurface1}.
Our instance is the {\em Trott curve}.
 This is a smooth plane quartic $\mathcal{Q}$, defined by the inhomogeneous polynomial
 \begin{equation}
 \label{eq:trottpolynomial}
 q(x,y) \,\,=\,\, 144(x^4+y^4)-225(x^2+y^2)+350x^2y^2+81 . 
 \end{equation}
The Trott curve is a widely known example of a real quartic whose $28$ bitangent lines are all real 
and touch at real points.
 It is also a \emph{M-curve}, meaning that the real locus $\mathcal{Q}_{\mathbb{R}}$ has four connected components, which is the highest possible number for a real genus 3 curve.
 
\begin{figure}[h]
	\centering
	\includegraphics[width=0.4\linewidth]{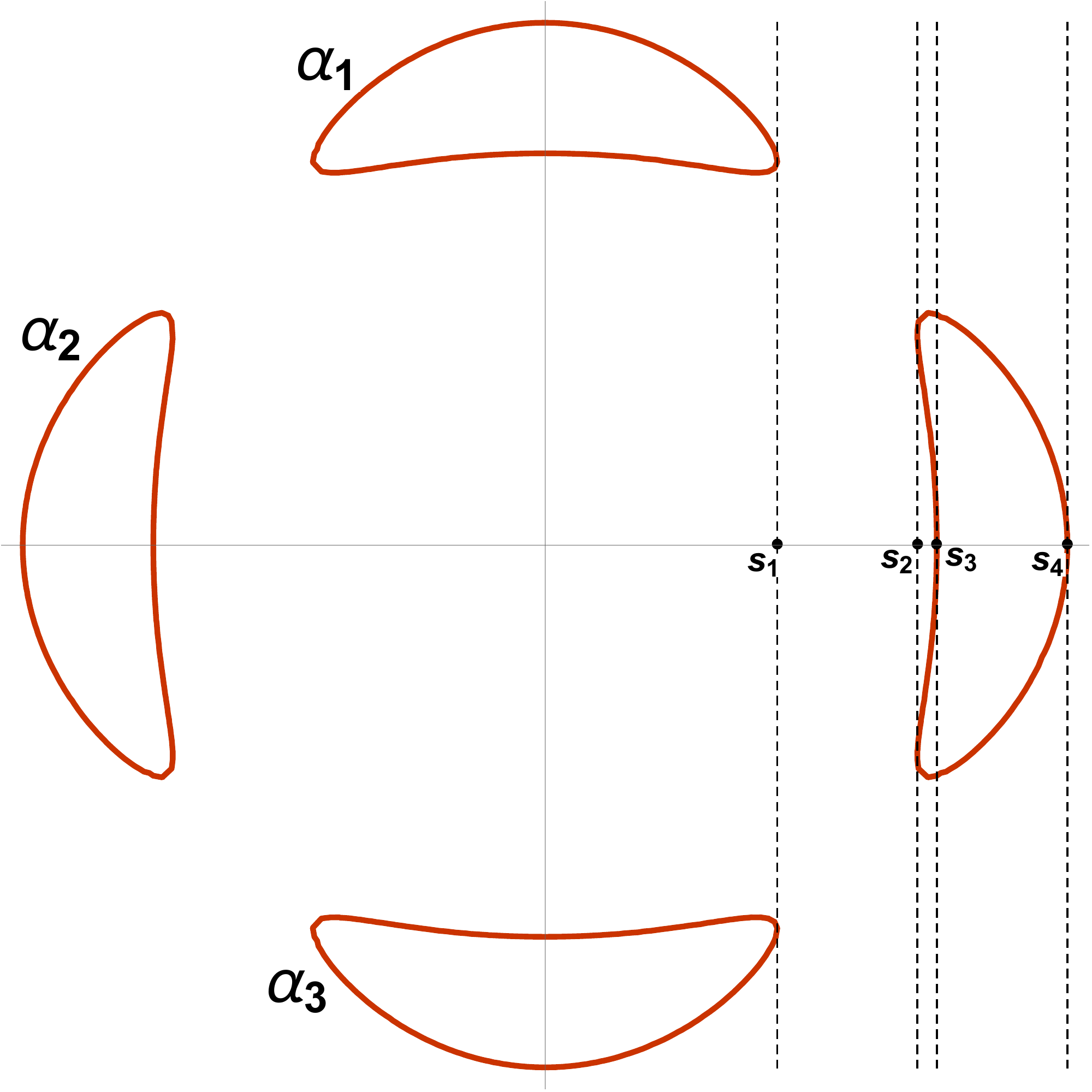} \\
\caption{The Trott curve, together with some branching points and some homology cycles.}
  \label{fig:trott}
\end{figure}

Given any M-curve $\mathcal{Q}$, there exists a symplectic basis 
for the homology group $H^1(\mathcal{Q},\mathbb{Z})$ 
whose associated period matrix 
$\Pi$ in (\ref{periodMatrix}) respects the real structure (cf.~\cite{Sil}). With such a choice of basis, the
Riemann matrix $B$ in (\ref{eq:riemannmatrix}) is real, and the theta function
defines a surface in real  $3$-space $\mathbb{R}^3$.
According to Theorem \ref{thm:riemann}, this surface is precisely
  our theta surface. 
 
 We now verify this numerically. The first step consists in identifying a real period matrix for $\mathcal{Q}$.
 To do so, we follow Silhol \cite{Sil}. First we observe that the Trott curve is highly symmetric: 
  its automorphism group is the dihedral group $D_4$, generated by the automorphisms
\begin{equation}\label{eq:autTrott} 
 (x,y) \mapsto (x,-y) ,\qquad (x,y)\mapsto(-x,y) ,\qquad  (x,y)\mapsto (-x,-y), \qquad  (x,y)\mapsto (y,x) .
\end{equation}
We choose a symplectic basis of $H^1(\mathcal{Q},\mathbb{Z})$ as follows: the cycles 
$\alpha_1,\alpha_2,\alpha_3$ are as indicated in Figure \ref{fig:trott}, where we take 
$\alpha_1,\alpha_3$ with clockwise orientation and $\alpha_2$ with 
counterclockwise orientation. Furthermore, referring again to Figure \ref{fig:trott}, we take $\beta_1,\beta_3$ to be the two cycles lying over the interval $[s_1,s_2]$ and intersecting $\alpha_1,\alpha_3$ respectively. Instead, the path $\beta_2$ is the one lying over the interval $(-\infty,-s_4]\cup[s_4,+\infty)$. Then, with these choices, 
there exist real numbers $a_j$ and purely imaginary numbers $b_i$ such that
the two $3 \times 3$ blocks in \eqref{periodMatrix} satisfy
\begin{equation}\label{eq:periodtrott}
\Pi_{\alpha} \,=\, \begin{pmatrix} 0 & -a_1 & 0 \,\\ -a_1 & 0 & a_1 \, \\ \,a_2 & -a_2 & a_2 \,\end{pmatrix}, \qquad
 \Pi_{\beta} \,= \, \begin{pmatrix}\, b_1 & 2b_1 &\, b_1 \\ b_1 & 0 & -b_1 \\ \, b_2 & 0 & b_2 \end{pmatrix}.
\end{equation}
The scalars $a_j$ are real because the paths $\alpha_1,\alpha_2,\alpha_3$ 
and the differentials $\omega_1,\omega_2,\omega_3$ are real.
The scalars  $b_i$ are purely imaginary because, by construction, the paths 
$\beta_1,\beta_2, \beta_3$ are anti-invariant with respect to complex conjugation 
on $\mathcal{Q}$. The symmetries in the matrices $\Pi_\alpha$ and $\Pi_\beta$ reflect the action
 of the automorphism group of \eqref{eq:autTrott}. Since $\Pi_{\alpha}$ is real and $\Pi_{\beta}$ is purely imaginary, 
we conclude that the Riemann matrix $\,B=-i\cdot \Pi_{\alpha}^{-1}\cdot \Pi_\beta\,$ has 
all its entries real.

At this point it should be straightforward to compute all the above explicitly. However, as we see from Figure \ref{fig:trott}, the paths that we have chosen pass through the ramification points of the projection $\mathcal{Q}\to \mathbb{P}^1$ to the $x$-axis. Hence, we cannot use the existing routines, such as \verb|RiemannSurfaces| or \verb|abelfunctions| straight away. To solve this problem we mix the numerical strategy together with the symbolic one in Algorithm \ref{alg:thetaSurface2}.
Indeed, since the Trott curve is highly symmetric, we can compute its points symbolically in terms of radicals: more precisely, for any $x\in\mathbb{C}$, the four corresponding points $(x,y)$ on the Trott curve are given by
\begin{equation}\label{eq:trottsolution} 
y \,\,=\,\,  \pm {\frac{\sqrt{225-350x^2\pm \sqrt{39556 x^4 - 27900 x^2 + 3969}}}{12\sqrt{2}}} .
\end{equation}
With this formula, we represent the abelian integrals on the Trott curve as symbolic integrals in the single variable $x$, 
and we then compute these
numerically. We have implemented this strategy in Maple
 and we found the parameters in the period matrices of \eqref{eq:periodtrott} to be
\begin{align*}
a_1&\,=\,-0.02498252478, & a_2&\,=\,0.03154914935,\\
b_1&\,=\,\phantom{-} 0.01384015941i, & b_2&\,=\,0.02348847438i.
\end{align*} 
Consequently, the Riemann matrix for the Trott with our choice of homology basis equals
\begin{equation*}
B\,\,=\,\,
\begin{pmatrix}0.926246 &  0.553994 &  0.372252 \\
0.553994 &   1.10799 &  0.553994 \\
0.372252 & 0.553994 & 0.926246
\end{pmatrix}.
\end{equation*}
The next step is to compute some points in the theta surface $\mathcal{S}$. This can be done via Algorithm \ref{alg:thetaSurface1} using the methods of \verb|RiemannSurfaces|, as long as we integrate away from the ramification points of the projection to the $x$ axis. Otherwise, we can employ again the symbolic representation of \eqref{eq:trottsolution}, together with numerical integration. 

We proceed as follows: first, we can choose the points $p_1,p_2$ in Algorithm \ref{alg:thetaSurface1} in such a way that the line passing through them is bitangent to $\mathcal{Q}$ and parallel to the $x$ axis. Then we compute the integrals in Algorithm \ref{alg:thetaSurface1} for points in small neighborhoods of $p_1,p_2$. 
Further details  on such computations can be found in
  Section \ref{sec7}, following the definition of an envelope in~\eqref{eq:stmaptangent}.
The rows below are five points on $\mathcal{S}$ we obtained with Maple:
\[
10^{-5} \cdot 
\begin{pmatrix}   
2.58293  & \:  -4.55191  & \: -6.38839 \\
2.53203  & \: -4.46200   & \: -6.26220 \\
2.48111  & \: -4.37204   & \: -6.13596 \\
2.43015  & \: -4.28204   & \: -6.00964 \\
2.37916  & \: -4.19200   & \: -5.88327
\end{pmatrix}. 
\]


The last step is to check that these points are zeroes of the theta function $\theta(\mathbf{x},B)$, up to an affine transformations. To do so, we employ the Julia package \texttt{Theta.jl}  that is described in \cite{Julia}. This 
package is the latest software for computing with theta functions.
 It is especially optimized for the case of small genus and, 
 more importantly, for repeatedly evaluating a theta function $\theta(\mathbf{x},B)$ at multiple points $\mathbf{x}$ 
 for the same fixed Riemann matrix~$B$. This allows for a fast evaluation of the theta function which is very helpful for our problems.

In our situation, we now have a sample of points $\mathbf{x}_1,\dots,\mathbf{x}_N$ on the theta surface.
These are given numerically.
We consider the transformed points $\Pi^{-1}_{\alpha}\mathbf{x}_1,\dots,\Pi^{-1}_{\alpha}\mathbf{x}_N$.
According to the proof of Riemann's Theorem \ref{thm:riemann}, there exists a
vector $\kappa\in \mathbb{C}^3$ such that the translated theta function 
$\theta(\textbf{x}+\kappa,B)$ vanishes on 
$\Pi^{-1}_{\alpha}\mathbf{x}_1,\dots,\Pi^{-1}_{\alpha}\mathbf{x}_N$. 
According to the full version of Riemann's Theorem \cite[Appendix to \S 3]{Mum1},
which incorporates theta characteristics,
the vector $\kappa$ can be assumed to have the form 
$\kappa = \frac{1}{2}\left( iB\varepsilon + \delta\right)$, where 
$\varepsilon,\delta \in \{0,1\}^3$. In particular, there are only $64$ 
possible choices for $\kappa$. We can check explicitly all of the $64$ possibilities.

In our experiments, we computed $N=10404$ points on the surface $\mathcal{S}$, and we evaluated
\[  m(\varepsilon,\delta) := \max_{i=1,\dots,N} |\theta(\mathbf{x}_i+\kappa,B)| \]
for each of the 64 possible choices of $(\varepsilon,\delta)$. This was computed by \texttt{Theta.jl}
on a standard laptop in approximately 9.6 minutes. We found that 
\[ m\left( \varepsilon_0, \delta_0 \right) \approx 6\cdot 10^{-12}, \qquad \text{ for } \qquad \varepsilon_0 = \begin{pmatrix} 1 \\ 1 \\ 1 \end{pmatrix},\,\, \delta_0 = \begin{pmatrix} 0 \\ 0 \\ 1 \end{pmatrix}. \]
For all the other choices of the pair $\varepsilon,\delta$, we determined that $10^{-3} \leq m(\varepsilon,\delta) \leq 2$. 

This computation amount to a numerical verification of Riemann's Theorem  \ref{thm:riemann}.
We have
\[ \mathcal{S} = \Pi_{\alpha}\cdot (\Theta_B - \kappa_0), \qquad \text{ for }\,\, \kappa_0 = \frac{1}{2}\left( iB\varepsilon_0 + \delta_0\right). \]
To conclude, this gives also a real analytic equation for $\mathcal{S}$. Indeed, for any $\kappa=\frac{1}{2}\left( iB\varepsilon + \delta\right)$,
 the translated theta divisor $\Theta_B-\kappa$ is cut out by the theta function with characteristic
\begin{align*} 
\theta[\varepsilon,\delta](\mathbf{x},B)  & = \sum_{n\in \mathbb{Z}^3}\mathbf{e}\left( -\frac{1}{2}\left(n+\frac{\varepsilon}{2}\right)^t B \left( n + \frac{\varepsilon}{2} \right) + i\left( n+\frac{\varepsilon}{2} \right)^t\left( \mathbf{x}+\frac{\delta}{2} \right) \right) \\
& = \sum_{n\in \mathbb{Z}^3}\exp \left(-\pi \left(n+\frac{\varepsilon}{2}\right)^t B \left( n + \frac{\varepsilon}{2} \right)  \right)\cdot \cos \left( 2\pi \left( n+\frac{\varepsilon}{2} \right)^t\left( \mathbf{x}+\frac{\delta}{2} \right) \right) .
\end{align*} 
This is a real analytic function since the matrix $B$ is real.

\section{Sophus Lie in Leipzig}
\label{sec7}

Felix Klein held the professorship for geometry at the University of Leipzig until  1886 
when he moved to G\"ottingen. In the same year,  Sophus Lie was appointed
to be Klein's successor and he moved from
Christiania (Oslo) to Leipzig. Lie also became one of the three directors of the Mathematical Seminar, 
an institution that Felix Klein had founded with the aim of strengthening the connection between education and research. In his first years at Leipzig,
 Lie was busy  with completing his major work {\em Theory of Transformation Groups}
 with the assistance of  Friedrich Engel. It was released  in three volumes in 1888, 1890 and 1893.
  Thereafter, the subject of double translation surfaces moved back in the focus of his teaching and research,
  and it caught   the attention of the mathematical community for the first time.

In what follows we discuss notable historical developments, we revisit
Lie's pre-Leipzig work on these surfaces, and we show
how it relates to our discussion in the previous sections.
In 1892 Lie published an article explaining how theta surfaces can be
parametrized by abelian integrals \cite[p.~481]{LieGes2}.
He invited two of his Leipzig students, Richard Kummer and Georg Wiegner, to rework the classification 
he had given in 1882  by means of abelian integrals.
The work of Kummer and Wiegner was published in their doctoral theses \cite{Kum,Wie}. Under the supervision of Lie's assistant Georg Scheffers, the two students also constructed a series of twelve plaster models 
that visualize the diverse shapes exhibited by theta surfaces. It is surprising that the models were 
commissioned  by Lie, who, unlike his predecessor Klein, had 
not been known for an engagement in popularizing mathematics in this manner.

The collection of mathematical models at the University of Leipzig was initiated by Felix Klein in 1880. 
At the end of the 19th century, the collection included around $350$ models and drawings. During the 
20th century, many models were lost or broken. A project for cataloging and restoring the collection 
was initiated by Silvia Sch\"oneburg in 2014. A catalogue describing all
$240$ remaining models is expected to be published in 2021.
There are some very rare models in the collection, among them 
nine of the surfaces created by  Lie's students. These plaster models and their mathematics are the topic
of the third author's diploma thesis~\cite{Struwe}, submitted to Leipzig University in 2020.
It was her find of the models by Kummer and Wiegner that brought us together for our project on theta surfaces.

In 1895 Poincar{\'e} presented his proof of the relation between double translation surfaces and Abel's Theorem \cite{Poin1895,Poin95}. This led to the idea that these surfaces can
be seen as theta divisors of Jacobians \cite[p.2]{Chern}. 
Darboux \cite{Darb} and Scheffers \cite{Sch} also published variants of the proof.
Eiesland \cite{Eies08,Eies09}  completed the classification initiated by Kummer and Wiegner. He also constructed plaster models for some of his surfaces (cf.~Figures \ref{fig:CardioidSurface}
and \ref{fig:DeltoidSurface}). Eiesland's plaster models of theta surfaces were donated to 
the collection at John Hopkins University.

Later on, our theme found its way into modern research. 
Shiing-Shen Chern \cite{Chern} 
characterized theta surfaces in terms of the \emph{web geometry} 
that was developed by Blaschke and Bol in the 1930's. 
In 1983, John Little \cite{Little83}  studied the theory for curves of genus $g \geq 3$, and he proposed a solution of the \emph{Schottky problem} of recognizing Jacobians in terms of {\em translation manifolds}. 
More precisely, he showed that a principally polarized abelian variety
of dimension $g$ is 
the Jacobian of a non-hyperelliptic curve if and only if its theta divisor can be 
written locally as a Minkowski sum of $g-1$ analytic curves.
Little's article \cite{Little92}  connects this point of view to the integrable systems approach
(cf.~\cite{DFS})  to the Schottky problem.  

\smallskip

The doctoral theses of Kummer and Wiegner built on Lie's earlier results.
One of these is the recovery of generating curves for a theta surface $\mathcal{S}$ from the equation of 
$\mathcal{S}$ by means of differential geometry. This was  helpful for 
constructing real surfaces in situations when the abelian integrals delivered complex values.
We state Lie's result using a slight modification of the set-up in Section \ref{sec2}.
Fix a quartic $\mathcal{Q}\subset \mathbb{P}^2$  and let $\mathcal{L}(0)$ be a line that is \emph{tangent} to $\mathcal{Q}$ at a smooth point $p_1(0)$ and intersects $\mathcal{Q}$ in other two points $p_3(0),p_4(0)$. Using the points $p_1(z)$ near $p_1(0)$, where
$z$ is a local coordinate, we obtain a parametrization as in \eqref{eq:stmap}:
\begin{equation}\label{eq:stmaptangent}
(s,t)\,\,\mapsto \,\,\,\Omega_1(p_1(s)) \,+\,\Omega_1(p_1(t)).
\end{equation}
The surface $\mathcal{S}$ is  the Minkowski sum $\mathcal{S}=\mathcal{C}+\mathcal{C}$, 
where $\mathcal{C}$ is the curve $z\mapsto \Omega_1(p_1(z))$. 
The scaled curve  $2\cdot \mathcal{C}$ lies in $\mathcal{S}$. This curve
was called an \emph{envelope} by Lie.

In classical differential geometry, an {\em  asymptotic curve} on a surface $\mathcal{S}$ is
 a curve whose tangent direction at each point has normal curvature zero on $\mathcal{S}$.
 This means that the tangent direction at each point is isotropic with respect to the 
 second fundamental form of~$\mathcal{S}$.

\begin{theorem}[Lie] \label{thm:lieasymptotic}
Let $\mathcal{S} = \mathcal{C}+\mathcal{C}$ be the theta surface  given
 by the parametrization~(\ref{eq:stmaptangent}). Then the envelope 
$\,2\cdot \mathcal{C}\,$ is an asymptotic curve of the surface $\mathcal{S}$. 
\end{theorem}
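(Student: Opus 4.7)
The plan is to identify the envelope $2 \cdot \mathcal{C}$ with the diagonal $\{s=t\}$ in the parametrization (\ref{eq:stmaptangent}), writing $\gamma(s) := \phi(s,s) = 2\Omega_1(p_1(s))$ for its natural parametrization, where $\phi(s,t) = \Omega_1(p_1(s)) + \Omega_1(p_1(t))$. A curve on a smooth analytic surface is asymptotic if and only if its acceleration vector lies in the tangent plane of the surface at each point, so the problem reduces to showing that $\ddot{\gamma}(s_0)$ is tangent to $\mathcal{S}$ at $\gamma(s_0)$. By Remark \ref{rmk:omegatangents}, the tangent $\dot{\Omega}_1(p_1(s))$ is proportional to $p_1(s) \in \mathbb{C}^3$ (via the canonical embedding), and differentiating once more gives that $\ddot{\gamma}(s)$ is a $\mathbb{C}$-linear combination of $p_1(s)$ and $\dot{p}_1(s)$. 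The argument therefore splits into two parts: this tangential claim about $\ddot{\gamma}(s)$, and the identification of the tangent plane $T_{\gamma(s_0)}\mathcal{S}$.

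The hard part is the second task, because $\phi$ is symmetric in $(s,t)$ and so its Jacobian drops rank on the diagonal: $\phi_s = p_1(s) = \phi_t$ when $s=t$, and one cannot read off the tangent plane directly from $\phi_s \wedge \phi_t$. I would resolve this by passing to symmetric coordinates $u = s+t$, $v = s-t$, in which $\phi$ is an even analytic function of $v$ and therefore descends to an analytic map $\Phi(u,w)$ with $w := v^2$, whose image is a smooth patch of $\mathcal{S}$ containing $\gamma(s_0) = \Phi(2s_0, 0)$. The tangent plane is then spanned by $\Phi_u(2s_0,0)$ and $\Phi_w(2s_0,0)$. Using $\phi_u = \tfrac{1}{2}(\phi_s+\phi_t)$ and $\phi_v = \tfrac{1}{2}(\phi_s - \phi_t) = 2v\,\Phi_w$, together with the Taylor expansion $p_1(s_0+\varepsilon) = p_1(s_0) + \varepsilon\,\dot{p}_1(s_0) + O(\varepsilon^2)$, a short calculation gives $\Phi_u(2s_0,0) \propto p_1(s_0)$ and $\Phi_w(2s_0,0) \propto \dot{p}_1(s_0)$. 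Hence
\[ T_{\gamma(s_0)}\mathcal{S} \,\,=\,\, \operatorname{span}_{\mathbb{C}}\bigl\{\, p_1(s_0),\ \dot{p}_1(s_0)\,\bigr\}, \]
which is precisely the projective tangent line $\mathcal{L}(s_0)$ to $\mathcal{Q}$ at $p_1(s_0)$, viewed inside $\mathbb{C}^3$.

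The conclusion is then immediate: the acceleration $\ddot{\gamma}(s_0)$ lies in the span of $p_1(s_0)$ and $\dot{p}_1(s_0)$ by the first paragraph, hence in $T_{\gamma(s_0)}\mathcal{S}$ by the second. So its normal component vanishes, the second fundamental form of $\mathcal{S}$ is zero on the tangent direction of $2 \cdot \mathcal{C}$, and $2 \cdot \mathcal{C}$ is asymptotic. The main obstacle is the parametric degeneracy of $\phi$ along the envelope addressed via the substitution $w = v^2$; once the tangent plane is correctly identified as the canonical tangent line to $\mathcal{Q}$, the asymptotic condition is essentially the observation that both the first and second derivatives of $\Omega_1 \circ p_1$ live in this tangent line.
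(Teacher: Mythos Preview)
The paper does not supply its own proof of Theorem~\ref{thm:lieasymptotic}; it only attributes the result to Lie \cite[p.~211]{LieGes2} and to Kummer \cite[p.~15]{Kum}. So there is no argument in the paper to compare against, and your proposal must be judged on its own.

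Your argument is correct. The key step---recognizing that the parametrization $\phi(s,t)=\Omega_1(p_1(s))+\Omega_1(p_1(t))$ has a rank drop along the diagonal and desingularizing it via the symmetric-square substitution $w=(s-t)^2$---is exactly what is needed, and it correctly identifies $T_{\gamma(s_0)}\mathcal{S}$ with the affine tangent line $\operatorname{span}\{p_1(s_0),\dot p_1(s_0)\}$ to $\mathcal{Q}$ at $p_1(s_0)$. Once that is established, the conclusion is immediate, since $\ddot\gamma(s_0)=2\,\ddot\Omega_1(p_1(s_0))$ is visibly a combination of $p_1(s_0)$ and $\dot p_1(s_0)$.

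One small imprecision: you assert $\Phi_w(2s_0,0)\propto \dot p_1(s_0)$, but a direct computation gives $\Phi_w(2s_0,0)=\tfrac{1}{2}\phi_{vv}(2s_0,0)=\tfrac{1}{4}\,\ddot\Omega_1(p_1(s_0))$, which is a linear combination of $p_1(s_0)$ and $\dot p_1(s_0)$ rather than a scalar multiple of $\dot p_1(s_0)$ alone. This does not affect the outcome, because together with $\Phi_u(2s_0,0)\propto p_1(s_0)$ the span is still $\operatorname{span}\{p_1(s_0),\dot p_1(s_0)\}$; it would be cleaner to state the conclusion for the span directly. You should also note that $\Phi$ is an immersion at $(2s_0,0)$ because $p_1(s_0)$ and $\dot p_1(s_0)$ are linearly independent in $\mathbb{C}^3$ (their third coordinates are $1$ and $0$ in the affine chart), which confirms that $\mathcal{S}$ is smooth along the envelope and that the tangent plane is well defined.
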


 This result appears in 
\cite[p.~211]{LieGes2}, albeit in a different formulation
that emphasizes minimal surfaces.
Our version  in Theorem \ref{thm:lieasymptotic} was presented by Kummer in \cite[p.~15]{Kum}.

\smallskip
 
   Lie's reconstruction is remarkable in that it solves  \emph{Torelli's problem} for genus $3$ curves.
   Indeed, Lie found his result several decades before  Torelli \cite{Tor}  
proved his famous theorem in algebraic geometry.  Torelli's problem asks to recover an algebraic curve 
 $\mathcal{C}$ from its Jacobian $J(\mathcal{C})$ together with the theta divisor 
 $\Theta \subset J(\mathcal{C})$.
   In our situation, once we recover the envelope $\mathcal{C}$ as the asymptotic curve of
   the surface $\mathcal{S}$, we can reconstruct the quartic curve $\mathcal{Q}$ 
   as in Remark \ref{rmk:quarticfromcurves}.  
   Note that      this reconstruction technique also works
    for singular quartics. 
   
   Algebraic geometers will notice a connection between Lie's approach and Andreotti's  
geometric  proof \cite{Andr} of Torelli's theorem. Indeed, the second fundamental 
form of $\mathcal{S}$ is the differential of the Gauss map
$\, \mathcal{S} \rightarrow {(\mathbb{P}^2)}^*$.
This map associates to each point of $\mathcal{S}$ its tangent space in $\mathbb{C}^3$.  
Andreotti  observed that the Gauss map of the theta divisor 
 $\Theta \subset J(\mathcal{C})$ is branched 
precisely over the curve in ${(\mathbb{P}^2)}^*$
dual to $\mathcal{C} \subset \mathbb{P}^2$.
Hence $\mathcal{C}$ can be recovered thanks to the biduality theorem.
It would be interesting to further study
  Lie's differential-geometric approach to the Torelli problem via
the   Gauss map. One natural question is whether 
Theorem~\ref{thm:lieasymptotic} extends to curves of higher genus 
and how this relates to Andreotti's method.

\begin{example}
	To illustrate Lie's result, we determine an envelope for  Scherk's minimal surface
	directly from the equation \eqref{eq:scherk}.
	After computing the second fundamental form, we see that a curve $(X(t),Y(t),Z(t))$ in
	the surface is asymptotic if and only if it satisfies
	\[ \frac{\dot{X}(t)^2}{\sin^2(X(t))}\, =\, \frac{\dot{Y}(t)^2}{\sin^2(Y(t))}\, , \quad \text{ or equivalently } \quad  \frac{\dot{X}(t)}{\sin(X(t))} \,=\, \pm\frac{\dot{Y}(t)}{\sin(Y(t))}. \]
	The solutions to this	 differential equation are given by the following two families of curves:
	\[ \frac{\tan\left( X/2 \right)}{\tan\left( Y/2 \right)} \,=\, c
	\qquad {\rm and} \qquad \tan\left( \frac{X}{2} \right)\cdot \tan\left( \frac{Y}{2} \right) \,=\, c 
	\qquad {\rm for} \quad c\in \mathbb{R}\setminus \{0\}. \] 
Consider a curve from the first family. Setting $X=2\arctan(t)$, we can parametrize it as
	\[ \left( 2\arctan(t)\,,\,\, 2\arctan\left( \frac{t}{c}\right)\,,\,\,
	\log \left( \frac{c^2+t^2}{c(t^2+1)} \right) \right). \]
	The last expression comes from the fact that $Z=\log\left( \frac{\sin(X)}{\sin(Y)} \right)$ holds
	on Scherk's surface.
	Now, setting $c=\frac{1}{5}$ and using Theorem \ref{thm:lieasymptotic}, we obtain the generating curve
	$\mathcal{C}$ given by
$$	\left( \arctan(t), \arctan\left( 5t\right),\frac{1}{2}\log \left( \frac{1+(5t)^2}{5(t^2+1)} \right) \right). $$
The resulting representation $\mathcal{S} = \mathcal{C} + \mathcal{C}$ is
precisely the one we presented in equation~\eqref{eq:scherkpara2}.
\end{example}


Already in 1869, Lie studied the parametrization of {\em tetrahedral theta surfaces}
\begin{equation}
\label{eq:TTS} \mathcal{S} \,\,= \,\, \bigl\{\, \alpha \cdot \exp(X)
\,+\,\beta \cdot \exp(Y)\,+ \, \gamma \cdot \exp(Z)\,=\,\delta \,\bigr\}. 
\end{equation}
Here $\alpha,\beta,\gamma,\delta$ are nonzero constants. These surfaces play a prominent role
in Theorem \ref{thm:degtheta}. The adjective ``tetrahedral'' refers to the fact that the
Delaunay polytope is a tetrahedron.
  Example \ref{ex:vierzwei} shows
that Scherk's surface is tetrahedral, after a coordinate change over~$\CC$.

 Lie proved that tetrahedral theta surfaces admit infinitely many representations
 $\mathcal{S} = \mathcal{C}_1 + \mathcal{C}_2$.
This was already mentioned in Remark  \ref{rmk:onsameconic}.
We present Lie's method for identifying these infinitely many pairs of generating curves.
 A key tool is the \emph{logarithmic transformation}
\begin{equation*}
\label{eq:logT}
X=\log(U),\quad Y=\log(V),\quad Z=\log(W) .
\end{equation*}
This  transforms the surface $\mathcal{S}\subset \mathbb{C}^3$ into the 
plane $\mathcal{P}\subset (\mathbb{C}^*)^3$ defined by the equation
\begin{equation} \label{eq:log(TTS)}
\mathcal{P} \,\,=\,\, \bigl\{\, \alpha \cdot U\,+\,\beta \cdot V\,\,+\,\,\gamma \cdot W\, = \, \delta \, \bigr\}.
\end{equation}
The generating curves in $\mathcal{S}=\mathcal{C}_1+\mathcal{C}_2$ 
correspond to curves $\mathcal{D}_1,\mathcal{D}_2$ such that $\mathcal{P} = \mathcal{D}_1\cdot \mathcal{D}_2$. Here  $\mathcal{D}_1\cdot \mathcal{D}_2$ denotes the {\em Hadamard product} of the two curves,
i.e.~the set   obtained from the coordinatewise product of all points in $\mathcal{D}_1$ with all points in $\mathcal{D}_2$. Lie studied this alternative formulation and found infinitely many pair of \emph{lines} $\mathcal{D}_1,\mathcal{D}_2\subset (\mathbb{C}^*)^3$ such that $\mathcal{P}=\mathcal{D}_1\cdot \mathcal{D}_2$.  

We shall state Lie's result more precisely. 
 The action of the group of translations on $\mathbb{C}^3$ corresponds under the logarithmic transformation to the action of the torus $(\mathbb{C}^*)^3$ on itself.
Thus, we are free to rescale the coordinates $U,V,W$.
  In particular,  we can assume that our plane $\mathcal{P}$ and the desired
lines  $\mathcal{D}_1$ and $\mathcal{D}_2$ contain  the point $\mathbf{1}=(1,1,1)$.
With this, the identity
      $\mathcal{P}=\mathcal{D}_1\cdot \mathcal{D}_2$ implies  $\mathcal{D}_1,\mathcal{D}_2 \subset \mathcal{P}$. On the theta surface side, this corresponds to translating the surface and the curves until all of them pass through the origin $\mathbf{0}=(0,0,0)$.

We next consider the closure 
 of the plane $\mathcal{P}$ and the lines 
$\mathcal{D}_1,\mathcal{D}_2$ inside the projective space $\mathbb{P}^3$ with coordinates $U,V,W,T$.
The arrangement of  coordinate planes
 $\{ UVWT= 0 \}$ in $\mathbb{P}^3$ intersects
 our plane $\mathcal{P}$ in four lines $\mathcal{H}_1,\mathcal{H}_2,
 \mathcal{H}_3,\mathcal{H}_4$.
 Here now is the promised result.

\begin{theorem}[Lie]\label{thm:lietetra}
Let $\mathcal{D}_1,\mathcal{D}_2$  be lines through $\mathbf{1}=(1:1:1:1)$ in $\mathcal{P}$.
 Then~$\mathcal{P}=\mathcal{D}_1\cdot \mathcal{D}_2$ if and only if the six lines $\,
 \mathcal{D}_1,\mathcal{D}_2,\,
 \mathcal{H}_1,\mathcal{H}_2,\mathcal{H}_3,\mathcal{H}_4\,$ are tangent to a common conic in $\,\mathcal{P}$.
\end{theorem}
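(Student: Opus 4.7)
The plan is to translate both sides of the biconditional into explicit algebraic conditions on the direction vectors of the lines $\mathcal{D}_1,\mathcal{D}_2$ at $\mathbf{1}$, and then match them through a single polynomial identity. I first identify the plane $\mathcal{P}$ with $\mathbb{P}^2$ via the projection $(U{:}V{:}W{:}T)\mapsto (U{:}V{:}W)$; then $\mathbf{1}$ corresponds to $(1{:}1{:}1)$, and the four lines $\mathcal{H}_1,\mathcal{H}_2,\mathcal{H}_3,\mathcal{H}_4$ become $U=0$, $V=0$, $W=0$, and $\alpha U + \beta V + \gamma W = 0$ respectively. A line through $\mathbf{1}$ is encoded by a direction vector $\mathbf{v}=(v,v',v'')$ satisfying the tangency constraint $\alpha v + \beta v' + \gamma v'' = 0$. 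Substituting the coordinatewise product $(1+s\mathbf{v}_1)\cdot(1+t\mathbf{v}_2)$ into the defining equation of $\mathcal{P}$ and using $\alpha+\beta+\gamma=\delta$ together with the two tangency constraints, the constant and linear terms in $s,t$ drop out, and the condition $\mathcal{P} = \mathcal{D}_1\cdot\mathcal{D}_2$ reduces to the single bilinear equation
\[ B(\mathbf{v}_1,\mathbf{v}_2) \,:=\, \alpha\, v_1 v_2 + \beta\, v_1' v_2' + \gamma\, v_1'' v_2'' \,=\, 0. \]

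For the conic side, I would use the family $C_{a,b,c}\colon (aU+bV+cW)^2 = 4(ab\,UV + ac\,UW + bc\,VW)$, which is automatically tangent to the three coordinate lines (each substitution $U=0$, $V=0$, $W=0$ yields a perfect square). A short substitution shows that $C_{a,b,c}$ is additionally tangent to $\alpha U+\beta V+\gamma W = 0$ precisely when $a/\alpha + b/\beta + c/\gamma = 0$, so the pencil of conics tangent to all four $\mathcal{H}_j$ is cut out by this linear relation on $(a{:}b{:}c)$. Inverting the symmetric matrix of $C_{a,b,c}$ gives the dual conic $aqr + bpr + cpq = 0$ in line coordinates $(p{:}q{:}r)$. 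The line $\mathcal{D}_i$ has equation coefficients $p_i = v_i' - v_i''$, $q_i = v_i'' - v_i$, $r_i = v_i - v_i'$, so its tangency to $C_{a,b,c}$ is linear in $(a,b,c)$. Combining the three linear conditions, the six lines have a common tangent conic if and only if the $3\times 3$ determinant
\[ \Delta \,:=\, \det\!\begin{pmatrix} \beta\gamma & \alpha\gamma & \alpha\beta \\ q_1 r_1 & p_1 r_1 & p_1 q_1 \\ q_2 r_2 & p_2 r_2 & p_2 q_2 \end{pmatrix} \]
vanishes.

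The heart of the proof is then to establish a polynomial identity
\[ \Delta \,=\, \lambda \cdot B(\mathbf{v}_1,\mathbf{v}_2) \cdot D(\mathbf{v}_1,\mathbf{v}_2), \]
with $\lambda$ a nonzero constant and $D(\mathbf{v}_1,\mathbf{v}_2)$ a bilinear form that vanishes precisely when $\mathbf{v}_1$ and $\mathbf{v}_2$ are proportional, i.e.\ when $\mathcal{D}_1=\mathcal{D}_2$. The bidegree count forces any cofactor of $B$ in $\Delta$ to be bilinear, since $\Delta$ has bidegree $(2,2)$ in $(\mathbf{v}_1,\mathbf{v}_2)$ while $B$ is bilinear. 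I would verify this identity by using the constraints $\alpha v_i + \beta v_i' + \gamma v_i'' = 0$ to eliminate $v_i''$ in terms of $v_i,v_i'$, expressing both $\Delta$ and $B\cdot D$ as explicit polynomials in four free parameters, and matching coefficients; a test computation in the symmetric case $\alpha=\beta=\gamma$ identifies $D$ as a scalar multiple of $\det(\mathbf{v}_1,\mathbf{v}_2,(\beta\gamma,\alpha\gamma,\alpha\beta))$. This factorization is the main obstacle: the expansion is tedious and offers no transparent geometric meaning on its own. A more conceptual alternative would be to recognize $\Delta$ and $B$ as two descriptions of the same discriminant of the involution on lines through $\mathbf{1}$ induced by conjugacy with respect to members of the conic pencil. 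Once the identity is in hand, the assumption $\mathcal{D}_1\ne\mathcal{D}_2$ gives $D\ne 0$, so $\Delta=0$ if and only if $B(\mathbf{v}_1,\mathbf{v}_2)=0$, and the theorem follows.
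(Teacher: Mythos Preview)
Your approach is correct but takes a longer route than the paper. Both proofs reduce each side of the biconditional to a $3\times 3$ determinant; the difference lies entirely in the choice of coordinates on the plane $\mathcal{P}$.

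You project out $T$, so that $\mathcal{H}_1,\mathcal{H}_2,\mathcal{H}_3$ become the coordinate lines. The dual-conic condition then involves the line coordinates $(p_i{:}q_i{:}r_i)$ of $\mathcal{D}_1,\mathcal{D}_2$, and your determinant $\Delta$ has bidegree $(2,2)$ in $(\mathbf{v}_1,\mathbf{v}_2)$. Matching it with the bilinear Hadamard condition $B=0$ forces you into the factorization $\Delta=\lambda\,B\,D$, which is true (and your identification of $D$ is correct: it vanishes exactly when $\mathbf{v}_1\parallel\mathbf{v}_2$, since rows~2 and~3 of $\Delta$ then coincide), but must be verified by a calculation you yourself call tedious and opaque.

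The paper instead chooses affine coordinates $(s,t)$ on $\mathcal{P}$ so that $\mathcal{D}_1=\{s=0\}$, $\mathcal{D}_2=\{t=0\}$, and $\mathcal{H}_4$ is the line at infinity; the remaining lines $\mathcal{H}_1,\mathcal{H}_2,\mathcal{H}_3$ are then $1+as+bt=0$, $1+cs+dt=0$, $1+es+ft=0$. In the dual picture the six lines become the six points $(1{:}0{:}0),(0{:}1{:}0),(0{:}0{:}1),(a{:}b{:}1),(c{:}d{:}1),(e{:}f{:}1)$, and the common-conic condition is exactly
\[
\det\begin{pmatrix} a & b & ab \\ c & d & cd \\ e & f & ef \end{pmatrix}=0.
\]
On the Hadamard side, $(1+as)(1+bt)=U+ab\,st$ etc., so $\mathcal{D}_1\cdot\mathcal{D}_2=\mathcal{P}$ iff the vector $(ab,cd,ef)$ lies in the span of $(a,c,e)$ and $(b,d,f)$, i.e.\ iff the \emph{same} determinant vanishes. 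No factorization is needed; both conditions are literally the same equation. What this buys is transparency: the ``extra'' factor $D$ in your approach is an artifact of coordinates that do not separate the two lines $\mathcal{D}_1,\mathcal{D}_2$ at the level of the determinant.
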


 This result is featured in
\cite[p.~526]{LieGes2}. We here present a self-contained proof.

\begin{proof}
We identify $\mathcal{P}$ with the affine plane with coordinates $s$ and $t$ by setting
\begin{equation}
\label{eq:UVWT}
 U \,=\, 1 + a s + b t \,,\,\, \, V \,=\, 1 + c s + d t \,,\,\, \, W \,\,=\,\, 1+ e s+ f t \quad {\rm and} \quad T = 1. 
 \end{equation}
The origin $(s,t) = (0,0)$ corresponds to the distinguished point ${\bf 1}$. The two lines of interest are
 $\mathcal{D}_1 = \{s = 0\}$ and $\mathcal{D}_2 = \{t = 0\}$.
The four coordinate lines are
$\mathcal{H}_1 = \{U= 0\}$,
$\mathcal{H}_2 = \{V = 0\}$,
$\mathcal{H}_3 = \{W = 0\}$,
and $\mathcal{H}_4$ is the line at infinity in the $(s,t)$-plane.
Thus, the six scalars $a,b,c,d,e,f$ in (\ref{eq:UVWT}) specify the inclusions
 $\,\mathcal{D}_1, \mathcal{D}_2 \subset \mathcal{P} \subset \PP^3$.
With these conventions, $\,  \mathcal{D}_1,\mathcal{D}_2,\,
 \mathcal{H}_1,\mathcal{H}_2,\mathcal{H}_3,\mathcal{H}_4\,$ are tangent to a common conic 
 in $\mathcal{P}$ if and only if
\begin{equation}
\label{eq:det=0} {\rm det} \begin{pmatrix}
a & b & ab \\
c & d & cd \\
e & f & ef \end{pmatrix} \,\, = \,\, 0 . 
\end{equation}
The Hadamard product $\,\mathcal{D}_1 \cdot \mathcal{D}_2\,$ is a surface in $\PP^3$.
It has the parametric representation
\begin{equation}
\label{eq:UVWTmult}
\tilde U \,=\, (1+as)(1+bt) \,,\,\,\,
\tilde V \,=\, (1+cs)(1+dt) \,,\,\,\,
\tilde W \,=\, (1+es)(1+ft) \,,\,\,\,
\tilde T = 1 .
\end{equation}
This can be rewritten as
$$ \tilde U \,=\, U \,+\, ab \cdot st \, ,\,\quad
\tilde V \,=\, V \,+\, cd \cdot st \, ,\,\quad \tilde W \,=\, W \,+\, ef \cdot st \, ,\,\quad
\tilde T \,= \,T . $$
Hence the surface $\,\mathcal{D}_1 \cdot \mathcal{D}_2\,$
 equals the plane $\mathcal{P}$ in $\PP^3$
if and only if the point $(ab:cd:ef:0)$ lies in $\mathcal{P}$.
This happens if and only if the condition (\ref{eq:det=0}) holds.
Now the proof is complete.
\end{proof}

Given the tetrahedral theta surface (\ref{eq:TTS}),
we can now construct a one-dimensional family of pairs $\mathcal{C}_1,\mathcal{C}_2$
of generating curves.  The corresponding line pairs
$\mathcal{D}_1,\mathcal{D}_2$  in the plane (\ref{eq:log(TTS)})
are found as follows. We consider the one-dimensional family of
conics that are tangent to $\mathcal{H}_1,\mathcal{H}_2,\mathcal{H}_3,\mathcal{H}_4$.
Each such conic has tangent lines 
pass through $\mathbf{1}$. These are $\mathcal{D}_1$ and $\mathcal{D}_2$.

In the algebraic formulation above,
the geometric constraints can be solved as follows.
The given theta surface (\ref{eq:TTS})
is specified by any solution to $\alpha + \beta+\gamma = \delta$.
The desired one-dimensional family is
the solution set to five equations in the six unknowns 
$a,b,c,d,e,f$. In order for the planes in
 (\ref{eq:UVWT}) and (\ref{eq:log(TTS)}) to agree, we need
$\,  \alpha a + \beta c + \gamma e \,=\, \alpha b + \beta d + \gamma f \,=\, 0 $.
To get unique parameters for our lines, we may also fix
 $a$ and $f $ in $\CC$. Finally,  the
quadratic equation (\ref{eq:det=0}) must be satisfied.
These five constraints define a curve in $\CC^6$
whose points are the solutions $\,(\mathcal{D}_1,\mathcal{D}_2)\,$ 
to $\,\mathcal{D}_1 \cdot \mathcal{D}_2 = \mathcal{P}\,$ 
and hence the solutions $\,(\mathcal{C}_1,\mathcal{C}_2)\,$
to $\,\mathcal{C}_1 + \mathcal{C}_2 = \mathcal{S}$.

We demonstrate this algorithm for computing generating curves of (\ref{eq:TTS})
in an example.

\begin{example} \label{ex:werevisit}
	We revisit Example \ref{eq:amoebaofline} and the corresponding tetrahedral theta surface given
	 \eqref{eq:fourterms}.	After replacing $Z$ with $Z+\log(3)$, the resulting surface
	 passes through $\mathbf{0}$. We have
$$ \begin{matrix}
	\mathcal{S} & = & \,\,\{ \,\exp(X)\,+\,\exp(Y)\,-\,3\exp(Z)\,+\,1\,\,=\,\,0 \,\} & \subset \,\, \CC^3, \smallskip \\
	\mathcal{P} &  = & \{ \,U \, + \,V \, - \, 3 W \, + \, T \,\, = \,\,0 \, \}  & \subset \,\, \PP^3. 
\end{matrix}
$$	
To find a valid parametrization of $\mathcal{P}$ as in (\ref{eq:UVWTmult}),
we consider the equations in $a,b,c,d$, $e,f$ described above.
We fix $a=1 , f =0$ and we leave $b$ unspecified. 
The remaining parameters are determined as $\, c = 1$,  $\,d = -b$, $\, e = 2/3$,
by requiring
(\ref{eq:det=0}) and that (\ref{eq:UVWTmult}) lies on $\mathcal{P}$.

We conclude that the plane $\mathcal{P}$ has the parametrizations
$$
\tilde U \,=\, (1+s)\cdot (1+bt)\,, \, 
\tilde V \,=\, (1+s)\cdot(1-bt)\,, \,
\tilde W \,=\, \left(1+\frac{2}{3}s \right)\cdot1, \,
 \hbox{for all}\,\, \, \quad b \in \CC \backslash \{0\}.
$$
Our tetrahedral theta surface $\mathcal{S}$ has 
the one-dimensional family of parametrizations:
$$ X \,=\, {\rm log}(1+s) \,+\, {\rm log}(1+bt)\,, \quad
Y \,=\, {\rm log}(1+s) \,+\, {\rm log}(1-bt)\,, \quad
Z \,=\, {\rm log} \left( 1 + \frac{2}{3} s \right) .$$
This example is admittedly quite special, but the method
works for all tetrahedral theta surfaces, i.e.~whenever the quartic curve $\mathcal{Q}$ is
among the last three types in Figures \ref{fig:nodalquartics} 
and \ref{fig:graphs}.
\end{example}

We conclude this article by returning to the twelve plaster models
of theta surfaces constructed by the doctoral students of Sophus Lie at Leipzig
in 1892. In Figure \ref{fig:KummerWiegner} we display
one model due to  Richard Kummer \cite{Kum}
and one model due to Georg Wiegner \cite{Wie}.

\begin{figure}[h!]
   \begin{minipage}{0.50\textwidth}
     \centering
     \includegraphics[width=.99\linewidth]{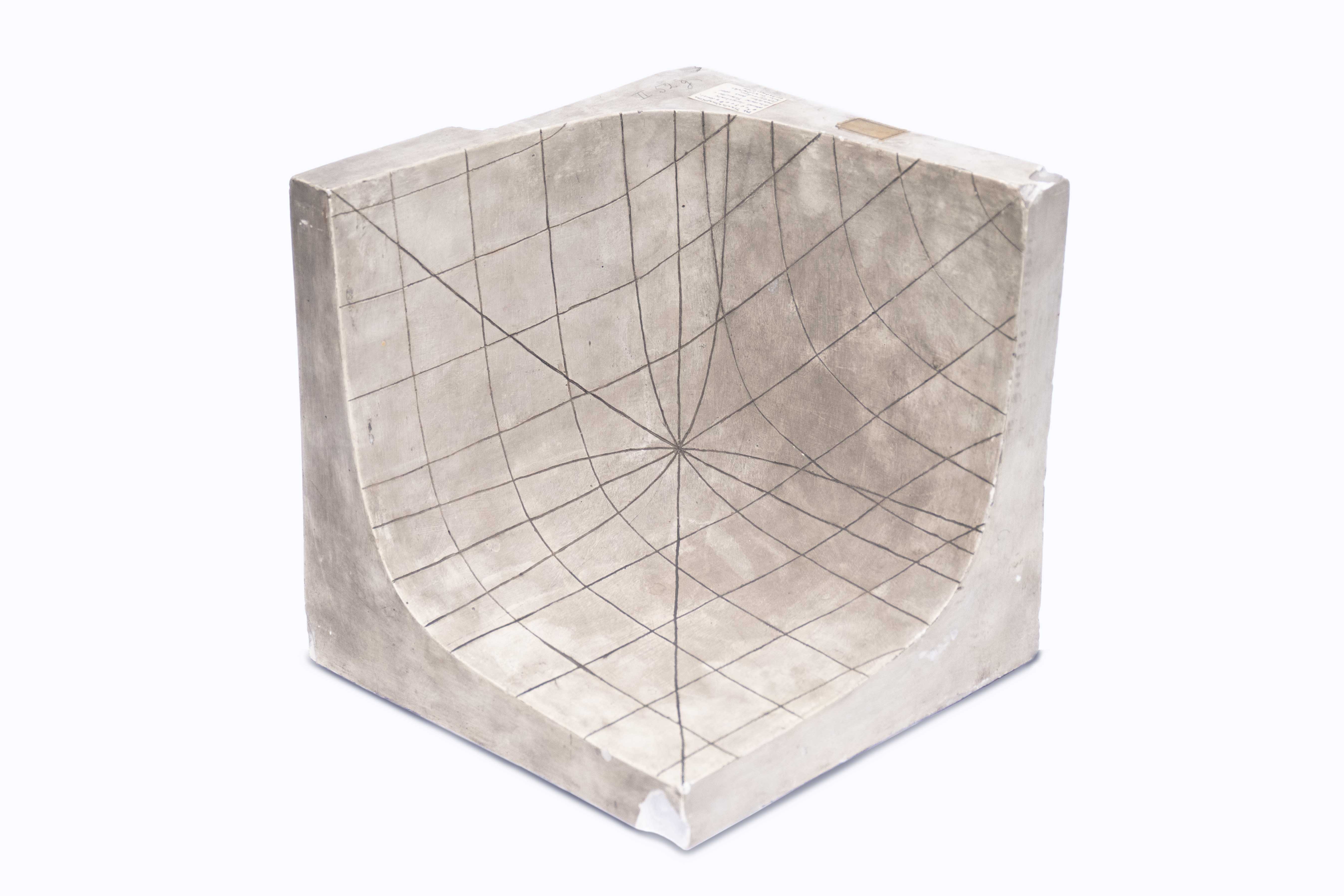} \\
   \end{minipage}\hfill
   \begin{minipage}{0.50\textwidth}
     \centering
     \includegraphics[width=.98\linewidth]{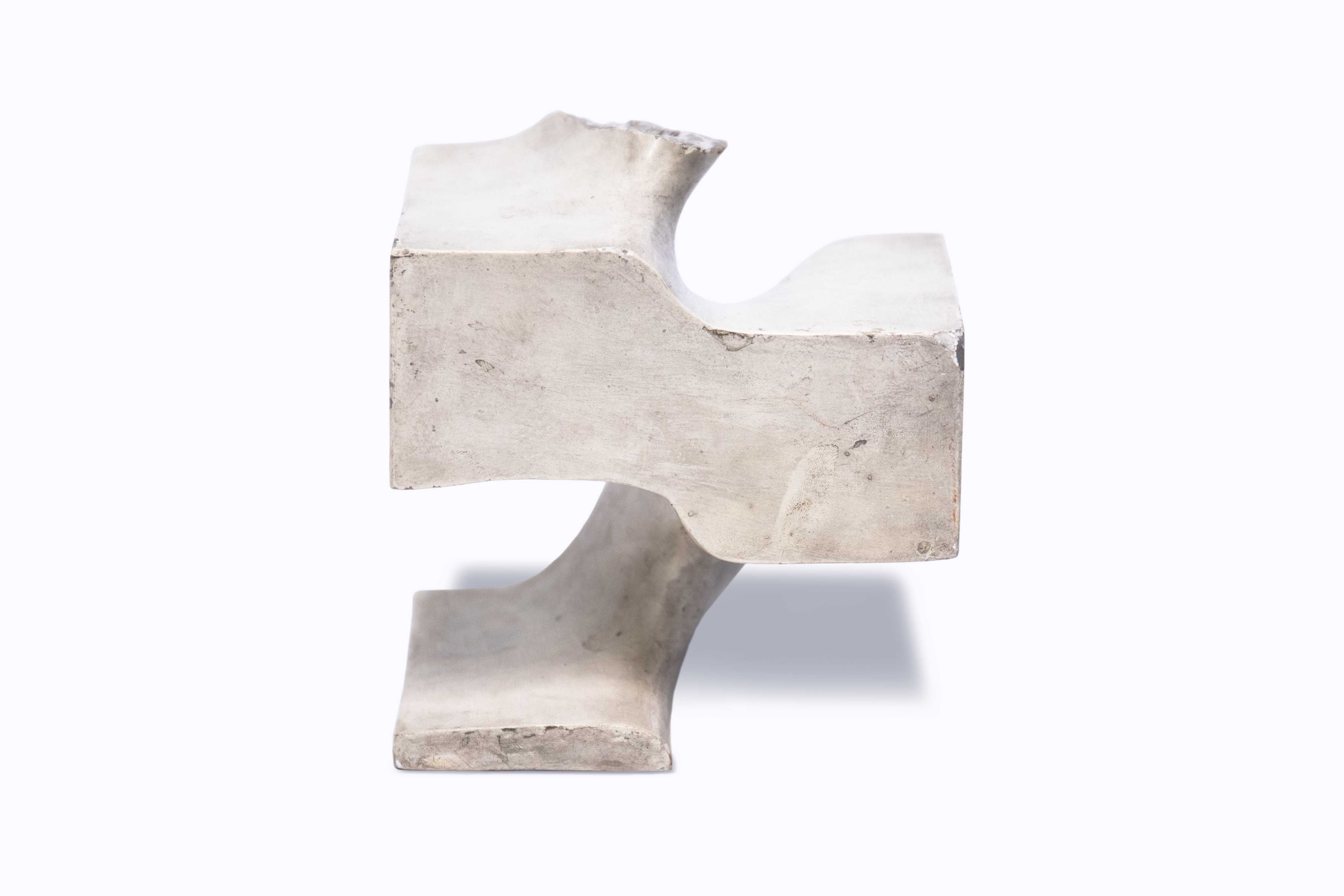} \\
   \end{minipage}
   \caption{Plaster models of theta surfaces
constructed  in the 1890s by Lie's students
 Kummer (left) and Wiegner (right).
These models are still in the collection at Universit\"at Leipzig.
   \label{fig:KummerWiegner}
   }
\vspace{-0.02in}   
\end{figure}

The model on the left in Figure \ref{fig:KummerWiegner} shows
the tetrahedral theta surface
$$ \mathcal{S} \quad = \quad \bigl\{
\, 10^{-X} + 10^{-Y} + 10^{-Z} \, = \, 1 \,\bigr\}. $$
Kummer derives this surface in \cite[Section III.6]{Kum}
from a pencil of conics like in Example~\ref{eq:amoebaofline}.
In \cite[p.~32]{Kum} he applies a particular
transformation to the surface, which seems to be advantageous for the practical construction of a plaster model.
The one-dimensional family of Minkowski decompositions $\,\mathcal{S} = \mathcal{C}_1 + \mathcal{C}_2\,$
into curves can be found using our algorithm for Theorem \ref{thm:lietetra}. The model on the right in Figure \ref{fig:KummerWiegner} shows
another theta surface, namely
$$ \mathcal{S} \quad = \quad \biggl\{\,
{\rm tan}(Z) \, + \, \frac{2X}{X^2+2Y} \,\, = \,\,0 \,\biggr\}. $$
Wiegner derives this equation in \cite[Section IV.11]{Wie} from a quartic $\mathcal{Q}$
that decomposes into a cubic curve and one of its flex lines.
 In \cite[Section II.4]{Wie}, Wiegner rederives the Weierstrass normal form,
 and he fixes the flex line to be the line at infinity.
For the surface $\mathcal{S}$ he starts with
the rational cubic $\,q = y^2 - x^2(x-1)$, and he ends up
on \cite[p. 65]{Wie} with the equation seen above. The surface $\mathcal{S}$ is shown in \cite[Figure II, Tafel A]{Wie}.
In his appendix  \cite[p.~82]{Wie}, Wiegner offers a delightful
description of how one actually builds a plaster model in practice.

\smallskip

This final section connects the 19th century with
the 21st century, and differential geometry with
algebraic geometry. Theta surfaces are beautiful objects,
not just for 3D printing, but they offer new vistas
on the moduli space of genus $3$ curves.
The explicit degenerations in Sections \ref{sec5}
and \ref{sec6}, and the tools from
numerical algebraic geometry in Section~\ref{sec4},
 should be useful for many applications,
such as three-phase solutions of the KP equation~\cite{DFS}.

\medskip

\textbf{Acknowledgments:} We are thankful to the referees for their helpful comments and remarks. We are grateful to John Little for informing us of the quadric double translation surfaces of Example \ref{Quadric} and to Evgeny Ferapontov for mentioning his paper \cite{BalFer} to us.  

\bibliographystyle{spmpsci}

\end{document}